\providecommand{\U}[1]{\protect\rule{.1in}{.1in}}
\newtheorem{theorem}{Theorem}
\newtheorem{definition}[theorem]{Definition}
\newtheorem{lemma}[theorem]{Lemma}
\newtheorem{proposition}[theorem]{Proposition}
\newtheorem{remark}[theorem]{Remark}
\newenvironment{proof}[1][Proof]{\noindent\textbf{#1.} }{\ \rule{0.5em}{0.5em}}
\newcommand{\N}{\mathbb{N}}
\newcommand{\R}{\mathbb{R}}
\def\N{\mathbb{N}}
\def\R{\mathbb{R}}
\def\E{\mathbb{E}}
\def\U{\mathbb{U}}
\begin{document}

\title{Approximation of a degenerate semilinear PDEs with a nonlinear Neumann boundary condition}

\author{Khaled Bahlali\thanks{Universit\'e de Toulon, IMATH, EA $2134$, $83957$ La Garde
cedex, France.}
\and Brahim Boufoussi \thanks{Department of Mathematics, Faculty of Sciences Semlalia, Cadi Ayyad University, 2390 Marrakesh, Morocco}
\and Soufiane Mouchtabih \footnotemark[1] \footnotemark[2]}
\date{}
\maketitle

\date{}
\maketitle


\noindent\textbf{Abstract}:
We consider a system of semilinear partial differential equations (PDEs) with a nonlinearity depending  on both the solution and its gradient. The Neumann boundary condition depends on the solution in a nonlinear manner. The uniform ellipticity is not required to the diffusion coefficient. We show that this problem admits a viscosity solution which can be approximated by a penalization. The Lipschitz condition is required to the coefficients of the diffusion part. The nonlinear part as well as the Neumann condition are Lipschitz. Moreover, the nonlinear part is assumed monotone in the solution variable.
Note that the existence of a viscosity solution to this problem has been established in \cite{PardZhang} then completed in \cite{PardRas}. In the present paper, We construct a sequence of penalized system of decoupled forward backward stochastic differential equations (FBSDEs) then we directly show its strong convergence. This allows us to deal with the case where the nonlinearity depends on both the solution and its gradient.  Our work extends, in particular, the result of \cite{BouCas} and, in some sense, those of \cite{BahMatZal, bbm}. In contrast to works \cite{BahMatZal,  bbm, BouCas}, we do not pass by the weak compactness of the laws of the stochastic system associated to our problem.
\vskip 0.3cm
\noindent\textbf{Keywords:} Reflecting stochastic differential equation; Penalization method; Backward stochastic differential equations; Viscosity solution.

\noindent\textbf{AMS Subject Classification 2010: 60H99; 60H30; 35K61.}

\section{Introduction}
Let $D$ be a regular convex, open and bounded subset of $\mathbb{R}^d$. We introduce the function
$ \rho\in{\mathcal C}^{1}_{b}(\R^{d}) $ such that $ \rho=0 $ in $ \bar D $, $ \rho>0 $ in $ \R^{d}\setminus
{\bar D}$ and  $ \rho(x)=(d(x, {\bar D}))^{2} $ in
a neighborhood of ${\bar D}$. On the other hand, since
the domain $D$ is smooth (say ${\mathcal C^{3}}$), it is
possible to consider an extension $ l\in{\mathcal C}^{2}_{b}(\R^{d}) $
of the function $ d(\cdot\, ,\,\partial D) $ such that
$D$ and $\partial D$ are characterized by
\begin{equation}
D=\{x\in\R^{d}\,:\, l(x)>0\}\, \quad \text{and}\quad
\partial D=\{x\in\R^{d}\,:\, l(x)=0\}\,,\nonumber
\end{equation}
and for every $x\in\partial D$, $\nabla l(x)$ coincides with the
unit normal pointing toward the interior of $D$ (see for example \cite[Remark 3.1]{LionsSzn}). In particular we may and do choose $\rho$ and
$l$ such that
\begin{equation} \left<\nabla l(x)\,,\,\delta(x) \right> \leq 0\,,
\text{\quad for all $x\in\R^{d}$}\,, \label{E:penalization1} \end{equation}
where $\delta(x):=\nabla\rho(x)$ and is called the penalization term.
We have
$$\frac{1}{2}\delta(x)=x-\pi_{\bar{D}}(x),\quad \forall x\in\mathbb{R}^d $$
where $\pi_{\bar{D}}$ is the projection operator on $\bar D$. Consider the second-order differential operator
\begin{equation*}
\mathcal{L}=\frac{1}{2}\sum_{i,j}\left( \sigma\sigma^*(.)\right)_{ij}\frac{\partial^2}{\partial x_i \partial x_j}+\sum_{i}b_i(.)\frac{\partial}{\partial x_i}
\end{equation*}
where $b:\mathbb{R}^d\to \mathbb{R}^d$ and $\sigma:\mathbb{R}^d\to \mathbb{R}^{d\times d'}$ are given functions satisfying suitable assumptions.

 Our first aim is to establish the existence of a viscosity solution via penalization  to the following system of partial differential equations with nonlinear Neumann boundary condition, defined for $1\leq i\leq m$, $0\leq t\leq T$, $x\in D$.
 \begin{equation}\label{Nuemann-PDE-introduction}
 	\left\{
 	\begin{matrix}
 		\displaystyle\frac{\partial u_{i}}{\partial t}(t,x) +\mathcal{L}u_{i}(t,x)+
 		f_{i}(t,x,u(t,x),(\nabla u\sigma)(t,x))  = 0\,
 		\hfill\\
 		\noalign{\medskip}
 		u(T,x)  =  g(x)\,,\ \  x\in D\hfill\\
 		\displaystyle\frac{\partial u}{\partial n}(t,x)+h(t, x, u(t,x))=0
 		\,,\,\,
 		(t,x)\in[0,T)\times\partial D\,.
 	\end{matrix}\right.
 \end{equation}
To this end, we consider a sequence $(u^n)$ of viscosity solutions  of the following semi-linear partial
differential equations ($1\leq i\leq m$, $0\leq t\leq T$, $x\in\R^{d}$,
$n\in\N$).
\begin{equation}\label{edppen}
\left\{
\begin{aligned}
\displaystyle
&\frac{\partial u^{n}_{i}}{\partial t}(t,x) +\mathcal{L}\, u^{n}_{i}(t,x)+
f_{i}(t,x,u^{n}(t,x),(\nabla u^n\sigma)(t,x))\\
&\qquad\qquad -n\,<\delta(x),\nabla
u^{n}_{i}(t,x)+\nabla l(x)>
h_{i}(t, x, u^n(t,x))  = 0\,; \\
&u^{n}(T, x)  =  g(x)\, .\hfill\cr
\end{aligned}\right\}
\end{equation}
then we show that for each $n$, equation \eqref{edppen} has a viscosity solution $u_n$ which converges to a function $u$, and $u$ is a viscosity solution to \eqref{Nuemann-PDE-introduction}. Our method is probabilistic.

The authors of \cite{BahMatZal, bbm, BouCas} considered the case where $f$ does not depends on $\nabla u$.  Using the connection between backward stochastic differential equations (BSDEs) and partial differential equations (PDEs), the convergence of $u^n$ to $u$ has been established in \cite{BouCas} for bounded and uniformly Lipschitz coefficients $b$ and $\sigma$. The authors of \cite{BahMatZal} extended the result of \cite{BouCas} to the case where $b$ and $\sigma$ are bounded continuous. The case where $b$, $\sigma$ and $f$ are bounded measurable is considered in \cite{bbm} in the framework of $L^p$-viscosity solution. The techniques developed in the previous works rely on tightness properties of the associated sequence of BSDEs in the Jakubowski $S$-topology.  The main drawback of this method is that it does not allow to deal with the nonlinearity $f$ depending on $\nabla u$. Here, our method is direct and does not pass by weak compactness properties.  Usually, when the nonlinearity $f$ depends in the gradient of the solution, PDEs techniques are used to control the gradient $\nabla u$ in order to get the convergence of the associated BSDE. And generally, a uniform ellipticity of the diffusion is required to get a good control of the gradient $\nabla u$, see for instance \cite{bep2017, DThese, Darticlehomaop}  where this method is used in homogenization of nonlinear PDEs. Our approach is completely different:  We use a purely probabilistic method, which allows us to deal with (possibly) degenerate PDEs. The convergence of the penalized BSDE is provided only by the convergence of the penalized forward SDE. Our proof essentially use \cite[Proposition 6.80, Annex C]{PardRas2014}. The latter has been already used by the authors of \cite{PardRas} in order to prove the continuity of the solution of a system of SDE-BSDE in its initial data $(t,x)$. By bringing  essential  modifications in the  idea developed in \cite{PardRas}, we prove the convergence of our sequence of penalized BSDEs.

The paper is organized as follows: Section 2 contains some facts about reflected stochastic differential equations (SDEs) and generalized BSDEs. This mainly consists in  approximation, existence, uniqueness results and a priori estimates of the solutions. Section 3 is devoted to the penalization of the nonlinear Neumann PDE.

\section{Preliminaries and formulation of the problem}
Throughout the paper, for a fixed $T>0$, $(W_t; t\in [0,T])$ is a $d -$dimensional Brownian motion defined on a complete probability space $(\Omega,\mathcal{F},\mathbb{P})$ and for every $t\in[0,T]$, $\mathcal{F}^t_s$ is the $\sigma-$algebra $\sigma(W_r;\, t\le r\le s)\vee\mathcal{N}$ if $s\ge t$ and $\mathcal{F}^t_s=\mathcal{N}$ if $s\le t$, where $\mathcal{N}$ is the $\mathbb{P}-$zero sets of $\mathcal{F}$. For $q\ge 0$, we denote by $\mathcal{S}^q_d[0,T]$  the space of continuous progressively measurable stochastic processes $X:\Omega\times [0,T]\to \mathbb{R}^d$, such that for $q>0$ we have
$$\mathbb{E}\sup_{t\in[0,T]}|X_t|^q<+\infty.$$
For $q\ge 0$, we denote by $\mathcal{M}^q_d(0,T)$ the space of progressively measurable stochastic processes $X:\Omega\times [0,T]\to \mathbb{R}^d$ such that:
$$\mathbb{E}\left[\left(\int_{0}^{T}|X_t|^2dt \right)^{\frac{q}{2}}  \right]<+\infty\quad \mbox{if}\quad q>0; \quad and \quad  \int_{0}^{T}|X_t|^2dt<+\infty\, \quad\mathbb{P}-a.s.\quad \mbox{if}\quad q=0. $$
\subsection{Penalization for reflected stochastic differential equation}
 Let $(t,x)\in [0,T]\times \bar{D}$. The reflected SDE  under consideration is
\begin{equation}\label{rsde2}
\left\{
\begin{aligned}
& X^{t,x}_{s}=x+\int_{t}^{s} b(X^{t,x}_{r})\, dr+\int_{t}^{s}\sigma(X^{t,x}_{r})\, dW_{r}+K^{t,x}_{s}, \\
& K^{t,x}_s=\int_t^s\nabla l(X^{t,x}_r)d|K^{t,x}|_{[t,r]},\\
&|K^{t,x}|_{[t,s]}=\int_t^s1_{\{X^{t,x}_r\in \partial D\}}d|K^{t,x}|_{[t,r]},\quad s\in [t,T],
\end{aligned}
\right.
\end{equation}
 where the notation $|K^{t,x}|_{[t,s]}$ stands for the total variation of $K^{t,x}$ on the interval $[t,s]$, we denote this continuous increasing process by $k^{t,x}_s$. In particular we have
 \begin{equation}\label{k}
 k^{t,x}_s=\int_t^s<\nabla l(X^{t,x}_r),dK^{t,x}_r>.
 \end{equation}
Several authors have studied the problem of the existence of solutions of the reflected diffusion and its approximation by solutions of equations with penalization terms, we refer for example to \cite{LionsSzn,LauSlo,slominski2013,StrVara,Tanaka}. We consider the following sequence of penalized SDEs associated with our reflected diffusion $X^{t,x}$
\begin{equation}\label{PenaSDE}
X^{t,x,n}_s=x+\int_t^s\left[b(X^{t,x,n}_r)-n\delta(X^{t,x,n}_r) \right]dr+\int_t^s\sigma(X^{t,x,n}_r)dW_r,\quad s\in [t,T].
\end{equation}
For  $s\in [t,T]$, we put
\begin{eqnarray}\label{K^n, k^n}
K^{t,x,n}_s:=\int_t^s-n\delta(X^{t,x,n}_r)dr\quad\mbox{and}\quad
k^{t,x,n}_s:=\int_t^s<\nabla l(X^{t,x,n}_r),dK^{t,x,n}_r>.
\end{eqnarray}
We introduce the following assumption
\begin{itemize}
\item[$(A.1):$] There exist positive constants $C$ and $\mu$ such that for every $(x,y)\in\mathbb{R}^d$:
\begin{itemize}
	\item[(i)] $|b(x)| + |\sigma(x)\|\le C(1+|x|),$
	\item[(ii)]$|b(x)-b(y)| + |\sigma(x)-\sigma(y)\|\le \mu|x-y|.$
\end{itemize}
\end{itemize}
 It is known that under assumption $(A.1)$ equation (\ref{PenaSDE}) admits, for any fixed $n\in \mathbb{N}$, a unique strong solution, and we have for every $q\geq 1$:
\begin{equation}\label{boundXn}
\sup_{n\ge 0}\,\mathbb{E}\sup_{s\in[t,T]}|X^{t,x,n}_s|^{2q}+ \sup_{n\ge 0}\,\mathbb{E}\sup_{s\in[t,T]}|K^{t,x,n}_s|^{2q}+\sup_{n\ge 0}\mathbb{E}|K^{t,x,n}|^q_{[t,T]}<+\infty.
\end{equation}
The proof of the previous estimates can be found e.g. in   \cite[Lemma 3.1]{BahMatZal}.

The first assertion of the following theorem  is proved in \cite{Tanaka}, while the second one follows from \cite{slominski2013}.
\begin{theorem} \label{Tanaka-Slominski}
	Under assumption \rm{(A.1)}, we have
	
	(i) \ the system  (\ref{rsde2}) admits a unique solution,
	
	 (ii)  \ for every
	$ 1\leq q<\infty $ and $ 0<T<\infty $,

	$$\displaystyle \E\left[\sup_{t\leq s\leq T}| X^{t,x,n}_{s}-X^{t,x}_{s}|^{q}\right] \longrightarrow 0,\,\,\, \ as \ n\to\infty, $$ 	
the limit is uniform in $(t,x)\in[0,T]\times\bar D$.
\end{theorem}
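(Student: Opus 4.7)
The statement is essentially a citation of the two classical results referenced, so my plan is to sketch the ingredients behind each and point out why the convergence is uniform in $(t,x)$.

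For part (i), the plan is to set up the deterministic Skorokhod problem associated with the smooth convex domain $D$. Given a continuous path $z$, one seeks a pair $(y,k)$ with $y$ taking values in $\bar D$, $k$ of bounded variation and supported on $\{s:y_s\in\partial D\}$, and $dk_s$ directed along $\nabla l(y_s)$, such that $y_s=z_s+\int_0^s\nabla l(y_r)dk_r$. Because $D$ is convex and smooth, this Skorokhod map $\Gamma:z\mapsto(y,k)$ is Lipschitz in the sup-norm on compact sets. Composing with the Picard operator $z_s=x+\int_t^s b(\Gamma(z)_r)dr+\int_t^s\sigma(\Gamma(z)_r)dW_r$ and using $(A.1)$, a standard fixed-point argument in $\mathcal S^2_d[t,T]$ then yields existence and uniqueness for (\ref{rsde2}).

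For part (ii), the plan is to apply It\^o's formula to $|X^{t,x,n}_s-X^{t,x}_s|^2$. The $b$ and $\sigma$ increments contribute the usual Lipschitz terms bounded by $C\int_t^s|X^{t,x,n}_r-X^{t,x}_r|^2dr$ plus a martingale. The crucial reflection/penalization cross term reads
\begin{equation*}
-2n\int_t^s\langle X^{t,x,n}_r-X^{t,x}_r,\delta(X^{t,x,n}_r)\rangle dr-2\int_t^s\langle X^{t,x,n}_r-X^{t,x}_r,\nabla l(X^{t,x}_r)\rangle dk^{t,x}_r.
\end{equation*}
Writing $\delta(x)=2(x-\pi_{\bar D}(x))$ and using the convex-projection inequality $\langle x-\pi_{\bar D}(x),y-\pi_{\bar D}(x)\rangle\le 0$ for $y\in\bar D$, applied with $y=X^{t,x}_r$, shows the first integrand is $\le -\tfrac12|\delta(X^{t,x,n}_r)|^2$, hence non-positive. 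For the second integrand one decomposes $X^{t,x,n}_r-X^{t,x}_r=(X^{t,x,n}_r-\pi_{\bar D}(X^{t,x,n}_r))+(\pi_{\bar D}(X^{t,x,n}_r)-X^{t,x}_r)$: the piece lying in $\bar D$ is controlled against $\nabla l(X^{t,x}_r)$ via the convexity of $D$, while the normal component is handled by (\ref{E:penalization1}) together with the a priori bound $\E\sup|\delta(X^{t,x,n}_r)|^2\to 0$ extracted from (\ref{boundXn}).

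One then takes suprema, applies BDG to the martingale, and closes with Gronwall's inequality to get $\E\sup_{s\in[t,T]}|X^{t,x,n}_s-X^{t,x}_s|^q\to 0$; passing from $q=2$ to general $q\ge 1$ uses the moment bounds (\ref{boundXn}) and interpolation/H\"older. Uniformity in $(t,x)\in[0,T]\times\bar D$ comes for free: the Lipschitz constants in $(A.1)$, the constants controlling $\delta,\nabla l$, and the moment estimates (\ref{boundXn}) are all independent of the starting point, and $\bar D$ is compact.

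The main obstacle is the treatment of the second, reflection cross term: one must simultaneously exploit convexity of $\bar D$, the sign condition (\ref{E:penalization1}), and the fact that $X^{t,x,n}$ spends asymptotically negligible time at distance $\gtrsim 1/\sqrt n$ from $\bar D$. This is exactly the content of S{\l}omi\'nski's argument in \cite{slominski2013}, which we invoke directly rather than reprove.
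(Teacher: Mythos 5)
Your proposal is correct and matches the paper, which offers no proof of its own but simply attributes part (i) to \cite{Tanaka} and part (ii) to \cite{slominski2013}; your concluding deferral to those references is exactly what the paper does. The additional sketches are a reasonable outline of the cited arguments (modulo some looseness, e.g.\ the Lipschitz property of the Skorokhod map and the estimate of the reflection cross term against $dk^{t,x}$, which is precisely the technical core of S{\l}omi\'nski's proof), but they go beyond what the paper itself records.
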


We extend  the processes $(X^{t,x},K^{t,x})$ and $(X^{t,x,n},K^{t,x,n})$ to $[0,t)$ by putting
\begin{equation*}
X^{t,x}_s=X^{t,x,n}_s:=x,\quad K^{t,x}_s=K^{t,x,n}_s:=0,\quad \hbox{for} \ s\, \in\,[0,t).
\end{equation*}
As a consequence of Theorem \ref{Tanaka-Slominski}, we have the following convergence, which is established in \cite[Lemma 2.2]{BouCas}.
\begin{lemma}\label{lemma3} {(\cite{BouCas})} Under assumptions of Theorem \ref{Tanaka-Slominski}, we have, for any $q\ge 1$:
	\begin{align}
	{\rm (i)}&\quad
	\lim_{n\to\infty}\E\left[\sup_{0\leq s\leq T}\left|K^{t,x,n}_{s}-K^{t,x}_{s}\right|^q
	\right]=0;\,
	\nonumber\\
	{\rm (ii)}&\quad
	\lim_{n\to\infty}\E\left[\sup_{0\leq s\leq T}\left|\int_{0}^{s}
	\varphi(X^{n}_{r})\, dK^{t,x,n}_{r}-
	\int_{0}^{s}\varphi(X_{r})\, dK^{t,x}_{r}\right|^{q}\right]=0\, \quad
	\hbox{for every} \ \ \varphi\in{\mathcal C}^{1}_{b}(\R^{d}).\nonumber
	\end{align}	
\end{lemma}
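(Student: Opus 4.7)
For part (i), the plan is to isolate the difference $K^{t,x,n}-K^{t,x}$ directly from the two forward equations. Subtracting \eqref{rsde2} from \eqref{PenaSDE} and rearranging gives, for $s\in[t,T]$,
\begin{equation*}
K^{t,x,n}_s - K^{t,x}_s = (X^{t,x,n}_s - X^{t,x}_s) - \int_t^s \bigl(b(X^{t,x,n}_r)-b(X^{t,x}_r)\bigr)\,dr - \int_t^s \bigl(\sigma(X^{t,x,n}_r)-\sigma(X^{t,x}_r)\bigr)\,dW_r,
\end{equation*}
while both processes vanish on $[0,t)$. I would take $\sup_{s\in[0,T]}$, raise to the $q$-th power, and take expectation. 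The drift integral is controlled by Jensen's inequality combined with (A.1)(ii), and the stochastic integral by the Burkholder--Davis--Gundy inequality, again using (A.1)(ii). Each of the three resulting terms is dominated by a constant multiple of $\E\sup_{s}|X^{t,x,n}_s-X^{t,x}_s|^{q}$ (or of $|\cdot|^{2q}$ if one prefers to isolate powers via Cauchy--Schwarz), which tends to zero by Theorem~\ref{Tanaka-Slominski}(ii).

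For part (ii), I would split the difference as
\begin{equation*}
\int_0^s \varphi(X^n_r)\,dK^{t,x,n}_r - \int_0^s \varphi(X_r)\,dK^{t,x}_r = \int_0^s \bigl(\varphi(X^n_r)-\varphi(X_r)\bigr)\,dK^{t,x,n}_r + \int_0^s \varphi(X_r)\,d(K^{t,x,n}_r-K^{t,x}_r).
\end{equation*}
The first piece is pathwise dominated by $\|\nabla\varphi\|_{\infty}\sup_{r}|X^n_r-X_r|\cdot |K^{t,x,n}|_{[0,T]}$; Cauchy--Schwarz, the uniform bound \eqref{boundXn} on the total variation of $K^{t,x,n}$, and Theorem~\ref{Tanaka-Slominski}(ii) applied with exponent $2q$ together deliver convergence to zero in $L^q$.

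The second piece is where the real work lies: uniform convergence of $K^{t,x,n}$ to $K^{t,x}$ does not by itself yield convergence of integrals against these signed measures. I would handle it through integration by parts. Since $K^{t,x,n}-K^{t,x}$ is continuous of bounded variation and $\varphi(X^{t,x})$ is a continuous semimartingale,
\begin{equation*}
\int_0^s \varphi(X_r)\,d(K^{t,x,n}_r-K^{t,x}_r) = \varphi(X_s)\bigl(K^{t,x,n}_s-K^{t,x}_s\bigr) - \int_0^s \bigl(K^{t,x,n}_r-K^{t,x}_r\bigr)\,d\varphi(X_r).
\end{equation*}
The boundary term is bounded by $\|\varphi\|_{\infty}\sup_s|K^{t,x,n}_s-K^{t,x}_s|$, which vanishes in $L^q$ by part (i). For the remaining integral, It\^o's formula expands $d\varphi(X_r)$ on $[t,T]$ into a $dr$-drift, a Brownian part $\nabla\varphi(X_r)\sigma(X_r)\,dW_r$, a reflection contribution $\nabla\varphi(X_r)\,dK^{t,x}_r$, and a second-order correction. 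The drift and second-order terms give $L^q$-convergence via $\sup_r|K^{t,x,n}_r-K^{t,x}_r|$; the Brownian part via BDG combined with Cauchy--Schwarz and the moment bounds on $X$; and the reflection term via pairing $\sup_r|K^{t,x,n}_r-K^{t,x}_r|$ with $k^{t,x}_T$. The main obstacle is precisely this last contribution: it forces one to apply part (i) with exponent $2q$ so that, by Cauchy--Schwarz, the remaining factor $\E(k^{t,x}_T)^{2q}$ (finite by the analogue of \eqref{boundXn} for the reflected system) closes the estimate.
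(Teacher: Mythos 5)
The paper does not actually prove this lemma: it cites \cite[Lemma 2.2]{BouCas} and moves on, so there is no internal proof to compare against, and your proposal has to stand on its own. Part (i) is correct and is the standard argument: the identity $K^{t,x,n}-K^{t,x}=(X^{t,x,n}-X^{t,x})-\int(b(X^{t,x,n})-b(X^{t,x}))\,dr-\int(\sigma(X^{t,x,n})-\sigma(X^{t,x}))\,dW$, together with (A.1)(ii), Jensen and Burkholder--Davis--Gundy, reduces everything to $\E\sup_s|X^{t,x,n}_s-X^{t,x}_s|^{q}\to 0$, which is Theorem \ref{Tanaka-Slominski}(ii). The first half of part (ii) is also fine: the pathwise bound $\|\nabla\varphi\|_\infty\sup_r|X^n_r-X_r|\,|K^{t,x,n}|_{[0,T]}$, Cauchy--Schwarz, the uniform total-variation bound in \eqref{boundXn} and Theorem \ref{Tanaka-Slominski}(ii) with exponent $2q$ close that term.

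The gap is in the second half of (ii). The integration by parts itself is legitimate ($K^{t,x,n}-K^{t,x}$ is continuous of bounded variation, so no bracket term appears), but the subsequent expansion of $d\varphi(X_r)$ by It\^o's formula --- in particular your ``second-order correction'' --- requires $\varphi\in\mathcal{C}^{2}$, whereas the lemma is stated for $\varphi\in\mathcal{C}^{1}_{b}(\R^{d})$ and is used in the paper precisely with $\varphi=\partial_i l\in\mathcal{C}^{1}_{b}$ (since $l$ is only $\mathcal{C}^{2}_{b}$). For a merely $\mathcal{C}^{1}$ function, $\varphi(X^{t,x})$ need not be a semimartingale and the integral $\int_0^s(K^{t,x,n}_r-K^{t,x}_r)\,d\varphi(X_r)$ is not defined, so as written your argument proves the lemma only for $\mathcal{C}^{2}_{b}$ test functions. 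Two repairs are available. First, mollify: choose $\varphi_\varepsilon\in\mathcal{C}^{2}_{b}$ with $\|\varphi_\varepsilon-\varphi\|_\infty\to 0$; the replacement error is at most $\|\varphi_\varepsilon-\varphi\|_\infty\bigl(|K^{t,x,n}|_{[0,T]}+|K^{t,x}|_{[0,T]}\bigr)$, which is small in $L^{q}$ uniformly in $n$ by \eqref{boundXn} and \eqref{bound X and k}, and then your argument applies to $\varphi_\varepsilon$. Second, avoid It\^o altogether: approximate $\int_0^s\varphi(X_r)\,d(K^{t,x,n}_r-K^{t,x}_r)$ by Riemann--Stieltjes sums over a fixed partition; for fixed mesh the sums tend to zero by part (i), while the approximation error is bounded by the modulus of continuity of $r\mapsto\varphi(X_r)$ over the mesh times $|K^{t,x,n}|_{[0,T]}+|K^{t,x}|_{[0,T]}$, uniformly in $n$, which only needs $\varphi$ continuous and bounded. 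Either route completes the proof; the rest of your estimates (BDG for the Brownian part, Cauchy--Schwarz against $\E (k^{t,x}_T)^{2q}$ for the reflection part) are sound once $\varphi$ is smooth enough for them to make sense.
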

\begin{remark}\label{rq cv}
	\begin{itemize}
		\item[\rm (i)] Using Lemma \ref{lemma3} and the representations (\ref{k}), (\ref{K^n, k^n}), it holds that for any $q\ge 1$ and any $(t,x)\in[0,T]\times\bar D$
		\begin{equation}
		\lim_{n\to+\infty}\mathbb{E}\sup_{s\in[0,T]}|k^{t,x,n}_s-k^{t,x}_s|^q=0.
		\end{equation}
		\item[\rm (ii)] From \cite[Corollary 2.5]{slominski}, it follows that for each $q\ge 1$ and each $(t,x)\in[0,T]\times\bar D$
		\begin{eqnarray}\label{bound X and k}
		\mathbb{E}\sup_{s\in [0,T]}|X^{t,x}_s|^{2q}+\mathbb{E}\sup_{s\in [0,T]}|K^{t,x}_s|^{2q}+ \mathbb{E}|K^{t,x}|^q_{[0,T]}<+\infty.
		\end{eqnarray}
	\end{itemize}
\end{remark}

 \subsection{Backward inequality}
We state a lemma which is a version of \cite[Proposition 6.80, Annex C]{PardRas2014}. This lemma is essential in our proofs. We give its proof for the convenience.
\begin{lemma}\label{Lemma crucial}
	Let $(Y,Z)\in \mathcal{S}^0_m\times \mathcal{M}^0_{m\times d}$, satisfying
	\begin{equation*}
	Y_t=Y_T+\int_{t}^{T}d\mathcal{K}_r-\int_{t}^{T}Z_rdW_r, \quad 0\le t\le T, \ \mathbb{P}-a.s.,
	\end{equation*}
	where $\mathcal{K}\in \mathcal{S}^0_m $ and $\mathcal{K}_.(\omega)\in BV([0,T];\mathbb{R}^m)$ (the space of bounded variation processes).
	
	Assume be given
	\begin{itemize}
		\item  a non-decreasing stochastic process $L$ with $L_0=0$,
		\item a stochastic process $R_.(\omega)\in BV([0,T],\mathbb{R}),\ $ with  $R_0=0$,
		\item  a continuous stochastic process $ V_.(\omega)\in BV([0,T],\mathbb{R}),\ $ such that $V_0=0$,
		$$\mathbb{E}\left(\sup_{s\in [0,T]}\int_{s}^{T}e^{2V_r}dR_r \right)<\infty,$$
	\end{itemize}
	and
	\begin{itemize}
		\item[$(i)$] $<Y_r,d\mathcal{K}_r>\le \frac{\alpha}{2}\|Z_r\|^2dr+|Y_r|^2dV_r+ |Y_r|dL_r+dR_r$ as measures on $[0,T]$,
		\item[$(ii)$]$\mathbb{E}\sup_{r\in [0,T]}e^{2V_r}|Y_r|^2<+\infty$.
	\end{itemize}
	We  have the following conclusion : if $\alpha<1$, then there exist positive constants $C_1$, $C_2$ and $C_3$, depending only on $\alpha$, such that
	\begin{eqnarray*}
		&&	\mathbb{E}\left(\sup_{r\in[0,T]}|e^{V_r}Y_r|^2 \right)+\mathbb{E}\left(\int_{0}^{T}e^{2V_r}\|Z_r\|^2dr\right)\nonumber\\
		&& \qquad \le C_1\; \mathbb{E}|e^{V_T}Y_T|^2+C_2\mathbb{E}\left( \int_{0}^{T}e^{V_r}dL_r\right)^2+C_3\mathbb{E}\sup_{s\in[0,T]}\int_{s}^{T}e^{2V_r}dR_r.
	\end{eqnarray*}		
\end{lemma}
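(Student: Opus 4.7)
The plan is to apply It\^o's formula to $e^{2V_r}|Y_r|^2$ on $[t,T]$, then exploit the structural inequality (i) to generate a cancellation of the $|Y|^2\,dV$ terms and absorb the $\|Z\|^2$ contribution thanks to $\alpha<1$. Since $V$ is continuous and of bounded variation, it has zero quadratic variation and no cross variation with $W$, so It\^o's formula applied to the semimartingale $Y$ (whose quadratic variation is $\int\|Z_r\|^2\,dr$) yields
\begin{equation*}
e^{2V_t}|Y_t|^2 + \int_t^T e^{2V_r}\|Z_r\|^2\,dr = e^{2V_T}|Y_T|^2 + 2\!\int_t^T\! e^{2V_r}\langle Y_r, d\mathcal{K}_r\rangle - 2\!\int_t^T\! e^{2V_r}|Y_r|^2\,dV_r - 2\!\int_t^T\! e^{2V_r}\langle Y_r, Z_r\,dW_r\rangle.
\end{equation*}

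Multiplying hypothesis (i) by $2e^{2V_r}$ and integrating on $[t,T]$ bounds $2\int_t^T e^{2V_r}\langle Y_r, d\mathcal{K}_r\rangle$ by a sum in which the term $2\int_t^T e^{2V_r}|Y_r|^2\,dV_r$ exactly cancels the one on the right above. Rearranging so as to isolate $(1-\alpha)\int_t^T e^{2V_r}\|Z_r\|^2\,dr$ on the left, one obtains
\begin{equation*}
e^{2V_t}|Y_t|^2 + (1-\alpha)\!\int_t^T\! e^{2V_r}\|Z_r\|^2\,dr \le e^{2V_T}|Y_T|^2 + 2\!\int_t^T\! e^{2V_r}|Y_r|\,dL_r + 2\!\int_t^T\! e^{2V_r}\,dR_r - 2\!\int_t^T\! e^{2V_r}\langle Y_r, Z_r\,dW_r\rangle,
\end{equation*}
which is the step where the structural assumption $\alpha<1$ becomes decisive.

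Next I would take the supremum over $t\in[0,T]$ and then expectation, after a routine localization by $\tau_k=\inf\{s:|Y_s|+\int_0^s\|Z_r\|^2\,dr\ge k\}\wedge T$ so that the stochastic integral becomes a true martingale; hypothesis (ii) together with monotone/dominated convergence lets one send $k\to\infty$ at the end. For the stochastic integral, Burkholder--Davis--Gundy applied to $M_s=\int_0^s e^{2V_r}\langle Y_r, Z_r\,dW_r\rangle$ combined with the factorization $e^{4V_r}|Y_r|^2=(e^{V_r}|Y_r|)^2e^{2V_r}$ gives
\begin{equation*}
\mathbb{E}\sup_{t\le T}\Bigl|\int_t^T e^{2V_r}\langle Y_r, Z_r\,dW_r\rangle\Bigr| \le c\,\mathbb{E}\Bigl[\sup_{r}e^{V_r}|Y_r|\cdot\Bigl(\int_0^T e^{2V_r}\|Z_r\|^2\,dr\Bigr)^{1/2}\Bigr],
\end{equation*}
and Young's inequality splits this into $\varepsilon\,\mathbb{E}\sup_r e^{2V_r}|Y_r|^2$ plus a multiple of $\mathbb{E}\int_0^T e^{2V_r}\|Z_r\|^2\,dr$. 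The $dL$ integral is handled identically via $2\int_0^T e^{2V_r}|Y_r|\,dL_r\le 2\sup_r(e^{V_r}|Y_r|)\int_0^T e^{V_r}\,dL_r$ followed by Young, and the $dR$ integral is simply dominated by the assumed finite $\mathbb{E}\sup_s\int_s^T e^{2V_r}\,dR_r$.

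Finally, I choose $\varepsilon$ small enough that the $\mathbb{E}\sup_r e^{2V_r}|Y_r|^2$ piece can be absorbed on the left and that the $\|Z\|^2$ piece coming from Young's inequality leaves a strictly positive coefficient on the left---this last absorption works precisely because $1-\alpha>0$. The resulting constants depend only on $\alpha$, producing $C_1, C_2, C_3$. The one technically delicate point I expect to grind through is verifying that hypothesis (ii) and the assumed integrability of $\sup_s\int_s^T e^{2V_r}\,dR_r$ really dominate all the quantities uniformly along the localizing sequence so that the passage $k\to\infty$ is legitimate, but no new idea should be needed beyond standard dominated-convergence bookkeeping.
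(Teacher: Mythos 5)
Your outline follows the paper's proof almost step for step: It\^o's formula applied to $e^{2V_r}|Y_r|^2$, cancellation of the $|Y|^2\,dV$ terms against hypothesis $(i)$, isolation of $(1-\alpha)\int e^{2V_r}\|Z_r\|^2\,dr$, localization, Burkholder--Davis--Gundy with the factorization $e^{4V_r}|Y_r|^2=(e^{V_r}|Y_r|)^2e^{2V_r}$, and Young's inequality for the $dL$ term. However, the final absorption step, as you describe it, does not close. After BDG and Young you are left with
\begin{equation*}
\varepsilon\,\mathbb{E}\sup_r e^{2V_r}|Y_r|^2+\frac{c^2}{\varepsilon}\,\mathbb{E}\int_0^T e^{2V_r}\|Z_r\|^2\,dr ,
\end{equation*}
and you need simultaneously $\varepsilon<1$ (to absorb the first piece into the left-hand side) and $c^2/\varepsilon<1-\alpha$ (to absorb the second into the $(1-\alpha)\int e^{2V}\|Z\|^2$ term). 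These two requirements pull $\varepsilon$ in opposite directions, and since the BDG constant $c$ is not at your disposal, there is in general no admissible $\varepsilon$ --- certainly not for $\alpha$ close to $1$, and not with a claim that the constants depend only on $\alpha$. The paper resolves this with a two-step argument: first take plain expectations in the pre-supremum inequality (the localized stochastic integral is a true martingale, hence has zero mean) to obtain the a priori bound
\begin{equation*}
(1-\alpha)\,\mathbb{E}\int_t^{T_n}e^{2V_s}\|Z_s\|^2\,ds\le \mathbb{E}\Bigl(|e^{V_{T_n}}Y_{T_n}|^2+2\int_t^{T_n}e^{2V_s}\bigl(dR_s+|Y_s|\,dL_s\bigr)\Bigr),
\end{equation*}
and only then take the supremum, using this bound to \emph{replace} (rather than absorb) the $\|Z\|^2$ remainder produced by BDG. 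You should restructure your last step accordingly.

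A secondary point: your localizing sequence $\tau_k=\inf\{s:|Y_s|+\int_0^s\|Z_r\|^2\,dr\ge k\}\wedge T$ does not control the weight $e^{V_r}$, and $V$ is only assumed pathwise continuous of bounded variation, not uniformly bounded in $\omega$; so the stopped stochastic integral need not be a martingale. The stopping times should cap the weighted quantities, e.g.\ $\sup_{r}|e^{V_r}Y_r-e^{V_t}Y_t|+\int e^{2V_r}\|Z_r\|^2\,dr+\int e^{V_r}\,dL_r$, as in the paper. With these two corrections your argument coincides with the paper's proof.
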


\begin{proof}
	By It\^o's formula, we have
	\begin{eqnarray*}
		 &&|e^{V_t}Y_t|^2=|e^{V_T}Y_T|^2-2\int_{t}^{T}e^{2V_s}|Y_s|^2dV_s+2\int_{t}^{T}e^{2V_s}<Y_s,d\mathcal{K}_s>\\
		&&-\int_{t}^{T}e^{2V_s}\|Z_s\|^2ds-2\int_{t}^{T}<e^{V_s}Y_s,e^{V_s}Z_sdW_s>\\
		&&=|e^{V_T}Y_T|^2+2\int_{t}^{T}e^{2V_s}\left( <Y_s,d\mathcal{K}_s>-|Y_s|^2dV_s\right)-\int_{t}^{T}e^{2V_s}\|Z_s\|^2ds\\
		&&-2\int_{t}^{T}<e^{V_s}Y_s,e^{V_s}Z_sdW_s>.
	\end{eqnarray*}
	Using (i) of Lemma \ref{Lemma crucial}, we get
	\begin{eqnarray*}
		&&|e^{V_t}Y_t|^2+(1-\alpha)\int_{t}^{T}e^{2V_s}\|Z_s\|^2ds\le\\ &&\qquad\qquad\qquad|e^{V_T}Y_T|^2+2\int_{t}^{T}e^{2V_s}\left(dR_s+|Y_s|dL_s\right)-2\int_{t}^{T}<e^{V_s}Y_s,e^{V_s}Z_sdW_s>.
	\end{eqnarray*}
	We consider the following sequence of stopping time
	\begin{eqnarray*}
		T_n:=T\wedge\inf\{s\ge t: \sup_{r\in [t,s]}|e^{V_r}Y_r-e^{V_t}Y_t|+\int_{t}^{s}e^{2V_r}\|Z_r\|^2dr+\int_{t}^{s}e^{V_r}dL_r\ge n\}.
	\end{eqnarray*}
	For $s\in[t, \ T]$, we put
	$ N_s:=2\int_{0}^{s}1_{[t,T_n]}(r)<e^{V_r}Y_r,e^{V_r}Z_rdW_r>$.
	\\
	Since
	\begin{eqnarray*}
		&&\mathbb{E}\left( <N>_T\right)^{\frac{1}{2}}\le C\mathbb{E}\left(\int_{t}^{T_n}e^{4V_r}|Y_r|^2\|Z_r\|^2dr \right)^{\frac{1}{2}}\\
		&&\le \sqrt{2}C\mathbb{E}\left(\left[ \sup_{r\in [t,T_n]}|e^{V_r}Y_r-e^{V_t}Y_t|^2+|e^{V_t}Y_t|^2\right] \int_{t}^{T_n}e^{2V_r}|Z_r|^2dr \right)^{\frac{1}{2}}\\
		&& \le \sqrt{2}C\mathbb{E}\left(|e^{V_t}Y_t|+n \right)\sqrt{n}<+\infty.
	\end{eqnarray*}
Then, the process $\{N_s:s\in [0,T]\}$ is a martingale.
\\
	By the forgoing, we have
	\begin{eqnarray*}
		&&|e^{V_t}Y_t|^2+(1-\alpha)\int_{t}^{T_n}e^{2V_s}\|Z_s\|^2ds\le |e^{V_{T_n}}Y_{T_n}|^2+2\int_{t}^{T_n}e^{2V_s}\left(dR_s+|Y_s|dL_s\right)-(N_{T_n}-N_t).
	\end{eqnarray*}
	Taking expectation, it follows that
	\begin{eqnarray}\label{estimate Int Z}
	&&\mathbb{E}\left( |e^{V_t}Y_t|^2+(1-\alpha)\int_{t}^{T_n}e^{2V_s}\|Z_s\|^2ds\right) \le\nonumber\\
	&&\qquad\qquad\qquad \mathbb{E}\left(  |e^{V_{T_n}}Y_{T_n}|^2+2\int_{t}^{T_n}e^{2V_s}\left(dR_s+|Y_s|dL_s\right)\right).
	\end{eqnarray}
	On the other hand, we have
	\begin{eqnarray*}
		&&\mathbb{E}\sup_{r\in[0,T_n]}|e^{V_r}Y_r|^2\le \mathbb{E}\left( |e^{V_{T_n}}Y_{T_n}|^2
		+2\sup_{s\in[0,T_n]}\int_{s}^{T_n}e^{2V_r}dR_r+ 2\int_{0}^{T_n}e^{2V_s}|Y_s|dL_s+2\sup_{r\in [0,T_n]}|N_r|\right).
	\end{eqnarray*}
	The Burkholder-Davis-Gundy inequality shows that
	\begin{align*}
		\mathbb{E}\sup_{r\in[0,T_n]}|e^{V_r}Y_r|^2
		& \le \mathbb{E}\left( |e^{V_{T_n}}Y_{T_n}|^2+2\sup_{s\in[0,T_n]}\int_{s}^{T_n}e^{2V_r}dR_r+ 2\int_{0}^{T_n}e^{2V_s}|Y_s|dL_s\right)
		\notag
		\\
		& \ \quad +2C_{BDG}\mathbb{E}\left( \int_{0}^{T_n}e^{4V_r}|Y_r|^2\|Z_r\|^2dr\right)^{\frac{1}{2}} \notag
		\\
		& \le \mathbb{E}\left( |e^{V_{T_n}}Y_{T_n}|^2+2\sup_{s\in[0,T_n]}\int_{s}^{T_n}e^{2V_r}dR_r+ 2\int_{0}^{T_n}e^{2V_s}|Y_s|dL_s\right)
		\nonumber
		\\
		& \ \quad  +2C_{BDG}\mathbb{E}\left(\sup_{r\in [0,T_n]}e^{2V_r}|Y_r|^2 \int_{0}^{T_n}e^{2V_r}\|Z_r\|^2dr\right)^{\frac{1}{2}}
		\nonumber
	\\
		& \le
		\mathbb{E}\left( |e^{V_{T_n}}Y_{T_n}|^2+2\sup_{s\in[0,T_n]}\int_{s}^{T_n}e^{2V_r}dR_r+ 2\int_{0}^{T_n}e^{2V_s}|Y_s|dL_s\right)
		\nonumber
		\\
		& \ \quad +\mathbb{E}\left( \frac{1}{2}\sup_{r\in [0,T_n]}e^{2V_r}|Y_r|^2+2C_{BDG}^2 \int_{0}^{T_n}e^{2V_r}\|Z_r\|^2 dr\right) \nonumber
		\\
		& \le 2\mathbb{E}\left( |e^{V_{T_n}}Y_{T_n}|^2+2\sup_{s\in[0,T_n]}\int_{s}^{T_n}e^{2V_r}dR_r + 2\int_{0}^{T_n}e^{2V_s}|Y_s|dL_s\right)
		\nonumber
		\\
		& \ \quad  + 4C_{BDG}^2\mathbb{E}\left(  \int_{0}^{T_n}e^{2V_r}\|Z_r\|^2 dr\right),
	\end{align*}
	where $C_{BDG}$ denotes the Burkholder-Davis-Gundy constant.
\\
From inequality (\ref{estimate Int Z}), we get
	\begin{eqnarray*}
		 &&\mathbb{E}\sup_{r\in[0,T_n]}|e^{V_r}Y_r|^2+\frac{1}{2}\mathbb{E}\int_{0}^{T_n}e^{2V_r}\|Z_r\|^2dr\le \mu\,\mathbb{E}\left( |e^{V_{T_n}}Y_{T_n}|^2\right)\\
		&& +2\mu\,\mathbb{E}\left( \sup_{s\in[0,T_n]}\int_{s}^{T_n}e^{2V_r}dR_r\right) + 2\mu\,\mathbb{E}\left( \int_{0}^{T_n}e^{2V_s}|Y_s|dL_s\right),
	\end{eqnarray*}	
	where
	$$\mu =2+\frac{2C_{BDG}^2}{1-\alpha}+\frac{1}{2(1-\alpha)}.$$
	It follows that
	\begin{align*}
		 &\mathbb{E}\sup_{r\in[0,T_n]}|e^{V_r}Y_r|^2+\frac{1}{2}\mathbb{E}\int_{0}^{T_n}e^{2V_r}\|Z_r\|^2dr\le \mathbb{E}\left( \mu\,|e^{V_{T_n}}Y_{T_n}|^2+2\mu\sup_{s\in[0,T_n]}\int_{s}^{T_n}e^{2V_r}dR_r\right)  \\
		&\qquad\qquad+2\mu\mathbb{E}\left(\sup_{r\in [0,T_n]}e^{V_r}|Y_r|\int_{0}^{T_n}e^{V_r}dL_r\right) \\
		 &\mathbb{E}\sup_{r\in[0,T_n]}|e^{V_r}Y_r|^2+\frac{1}{2}\mathbb{E}\int_{0}^{T_n}e^{2V_r}\|Z_r\|^2dr\le \mathbb{E}\left( \mu|e^{V_{T_n}}Y_{T_n}|^2+2\mu\sup_{s\in[0,T_n]}\int_{s}^{T_n}e^{2V_r}dR_r\right) \\
		&\qquad\qquad\qquad\qquad\qquad +\frac{1}{2}\mathbb{E}\sup_{r\in [0,T_n]}e^{2V_r}|Y_r|^2+2\mu^2\mathbb{E}\left( \int_{0}^{T_n}e^{V_r}dL_r\right)^2 \\
		&\mathbb{E}\sup_{r\in[0,T_n]}|e^{V_r}Y_r|^2+\mathbb{E}\int_{0}^{T_n}e^{2V_r}\|Z_r\|^2dr\le \mathbb{E}\left( 2\mu|e^{V_{T_n}}Y_{T_n}|^2
		+4\mu\sup_{s\in[0,T_n]}\int_{s}^{T_n}e^{2V_r}dR_r\right)\\
		&\qquad\qquad\qquad\qquad  +4\mu^2\mathbb{E}\left( \int_{0}^{T_n}e^{V_r}dL_r\right)^2.
	\end{align*}
	Put
	$$C_1:=2\mu,\qquad C_2:=4\mu \qquad\mbox{and}\qquad C_3:=4\mu^2.$$
	We then have,
	\begin{align*}
		\mathbb{E}\left( \sup_{r\in[0,T_n]}|e^{V_r}Y_r|^2+\int_{0}^{T_n}e^{2V_r}\|Z_r\|^2dr\right)
&\le C_1\mathbb{E}|e^{V_{T_n}}Y_{T_n}|^2
		+C_2\mathbb{E}\sup_{s\in[0,T_n]}\int_{s}^{T_n}e^{2V_r}dR_r
\\
& \ \ \ \ + C_3\mathbb{E}\left( \int_{0}^{T}e^{V_r}dL_r\right)^2.
	\end{align*}
Letting $n$ tends to $ +\infty$, we conclude by using the Beppo-Levi theorem for the left-hand side term of the previous inequality, and the Lebesgue dominated convergence theorem for the right-hand side term.
\end{proof}

\subsection{The BSDEs associated to the nonlinear Neumann problem}

We introduce the generalized BSDEs which we have to use.
Let $f:\, [0,T]\times\R^{d}\times\R^{m}\times \mathbb{R}^{m\times d'}
\rightarrow\R^{m} $, $h:\, [0,T]\times\R^{d}\times\R^{m}\rightarrow\R^{m} $ and  $ g\,:\,\R^{d}\rightarrow\R^{m} $ be continuous functions, satisfying the following assumptions:
\begin{itemize}
	\item[$(A.4)$] There exist $C$, $l_f$ positive constants and $\beta <0$, $\mu_f\in \mathbb{R}$  such that for every $t\in[0,T]$ and every $(x, x^{\prime}, y, y^{\prime},z,z')
	\in\left(\mathbb{R}^{d}\right)^{2}\times\left(\mathbb{R}^{m}\right)^{2}\times (\mathbb{R}^{m\times d'})^2 $ we have:
	\begin{itemize}
		\item[(i)]	$<y-y',f(t, x, y,z)-f(t, x, y',z)>\,\leq \mu_f\, |y-y'|^2,$
		\item[(ii)] $|f(t, x, y,z)-f(t, x, y,z')|\le l_f \|z-z'\|,$
		\item[(iii)] $ |f(t,x,y,0)|\leq C\, (1+|y|), $	
		\item[(iv)] 	$<y-y',h(t,x,y)-h(t,x,y')>\le \beta|y-y'|^2,$
		\item[(v)] $ |h(t,x,y)|\leq C\, (1+|y|), $
		\item[(vi)] $|g(x)|\le C(1+|x|).$
	\end{itemize}	
\end{itemize}
For every $t\in [0,T]$ and $s\in [t,T]$, consider the following  generalized BSDEs
\begin{eqnarray}
&& Y^{t,x,n}_{s}=g(X^{t,x,n}_{T})+
\int_{s}^{T} f(r, X^{t,x,n}_{r}, Y^{t,x,n}_{r},Z^{t,x,n}_r)\, dr
+\int_s^Th(r,X^{t,x,n}_r,Y^{t,x,n}_r)dk^{t,x,n}_r \nonumber\\
&&\qquad \qquad\qquad -\int_{s}^{T}Z^{t,x,n}_{r}\, dW_{r}\label{bakseq},
\end{eqnarray}
and
\begin{equation}
Y^{t,x}_{s}=g(X^{t,x}_{T})+
\int_{s}^{T} f(r, X^{t,x}_{r}, Y^{t,x}_{r},Z^{t,x}_r)\, dr+\int_s^Th(r,X^{t,x}_r,Y^{t,x}_r)dk^{t,x}_r
-\int_{s}^{T}Z^{t,x}_{r}\, dW_{r}.
\label{baklimit}
\end{equation}
 According to \cite{Pard,PardZhang}, assumption $(A.4)$ ensures the existence of unique solutions to equations $(\ref{bakseq})$ and $(\ref{baklimit})$. The solutions of equations $(\ref{bakseq})$ and $(\ref{baklimit})$ will be respectively denoted by  $(Y_s^{t,x,n},Z^{t,x,n}_s)_{s\in [t,T]}$ and $(Y_s^{t,x},Z^{t,x}_s)_{s\in [t,T]}$.
\begin{lemma}
	Under assumptions \rm{(A.1)(i)} and \rm{(A.4)}, it holds that for any $t\in [0,T]$,
	\begin{equation}\label{bound Y^n }
	\sup_{n\ge 1}\mathbb{E}\left(\sup_{r\in [t,T]}|Y^{t,x,n}_r|^2+\int_{t}^{T}|Y^{t,x,n}_r|^2dk^{t,x,n}_r+\int_{t}^{T}\|Z^{t,x,n}_r\|^2dr \right) <+\infty
	\end{equation}
and
\begin{equation}\label{bound Y}
\mathbb{E}(\sup_{r\in [t,T]}|Y^{t,x}_r|)^q+\mathbb{E}\left( \int_{t}^{T}\|Z^{t,x}_r\|^2dr\right)^{\frac{q}{2}}  <+\infty, \qquad \forall\, q> 1.
\end{equation} 	
\end{lemma}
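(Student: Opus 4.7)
The proof is a direct application of the backward inequality Lemma \ref{Lemma crucial} to each of the BSDEs (\ref{bakseq}) and (\ref{baklimit}), the auxiliary processes $V$, $L$, $R$ being chosen to absorb the terms arising in $\langle Y_s, d\mathcal{K}_s\rangle$, where
\begin{equation*}
d\mathcal{K}_r = f(r, X_r, Y_r, Z_r)\,dr + h(r, X_r, Y_r)\,dk_r.
\end{equation*}
Splitting $f$ and $h$ about the origin and invoking (A.4)(i)--(iii) together with (A.4)(iv)--(v) and Young's inequality with parameter $\alpha \in (0,1)$ delivers the pointwise bound
\begin{equation*}
\langle Y_r, d\mathcal{K}_r\rangle \le \tfrac{\alpha}{2}\|Z_r\|^2\,dr + K|Y_r|^2\,dr + \beta|Y_r|^2\,dk_r + C|Y_r|(dr + dk_r),
\end{equation*}
with $K := \mu_f + C + l_f^2/(2\alpha)$ and $\beta < 0$.

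For the uniform estimate (\ref{bound Y^n }), I take $(X, k) = (X^{t,x,n}, k^{t,x,n})$ and choose $dV_r := K^+\,dr$, $dL_r := C(dr + dk_r)$, $dR_r \equiv 0$; the nonpositive term $\beta|Y_r|^2\,dk_r$ is discarded. Hypothesis (i) of Lemma \ref{Lemma crucial} then holds, and (ii) is satisfied, for each fixed $n$, by the existence theory of \cite{Pard, PardZhang}. The conclusion reads
\begin{equation*}
\mathbb{E}\sup_{r\in[0,T]}|Y^{t,x,n}_r|^2 + \mathbb{E}\int_t^T \|Z^{t,x,n}_r\|^2\,dr \le C\,\mathbb{E}|g(X^{t,x,n}_T)|^2 + C\,\mathbb{E}(k^{t,x,n}_T)^2,
\end{equation*}
whose right-hand side is uniformly bounded in $n$ thanks to (A.4)(vi) and (\ref{boundXn}).

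The $dk$-integral in (\ref{bound Y^n }) is not produced by Lemma \ref{Lemma crucial} directly, so I would recover it by running It\^o on $|Y^{t,x,n}_r|^2$ and this time keeping the favourable term $-2\beta|Y_r|^2\,dk_r = 2|\beta||Y_r|^2\,dk_r$ on the left. The cross term is split by Young,
\begin{equation*}
2C|Y_r|\,dk_r \le |\beta||Y_r|^2\,dk_r + (C^2/|\beta|)\,dk_r,
\end{equation*}
so that half of the $|Y|^2\,dk$ mass is absorbed against the negativity of $\beta$; the remaining terms are controlled by the previous paragraph together with the uniform bound on $\mathbb{E}(k^{t,x,n}_T)^2$. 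A standard BDG-based localization argument finishes (\ref{bound Y^n }).

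The $L^q$ estimate (\ref{bound Y}) for the limit BSDE follows by the same scheme with $(X, k) = (X^{t,x}, k^{t,x})$, invoking the $L^q$-version of the backward inequality from \cite[Proposition 6.80, Annex C]{PardRas2014} in place of Lemma \ref{Lemma crucial}; the higher-order moments of $X^{t,x}_T$ and $k^{t,x}_T$ required on the right-hand side are furnished by Remark \ref{rq cv}(ii). The main delicate point throughout is the treatment of the cross term $C|Y|\,dk$: one must split it against the negativity of $\beta$ carefully enough that neither the resulting coefficient of $|Y|^2\,dk$ nor the $dk$-only residual disturbs uniformity in $n$, and using Young with coefficient exactly $|\beta|$ is the natural choice that makes this work.
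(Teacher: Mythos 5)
Your argument is correct and rests on the same core computations as the paper's: It\^o's formula applied to $|Y|^2$, the monotonicity $\beta<0$ of $h$ in $y$ together with Young's inequality with weight $|\beta|$ to absorb the cross term $C|Y|\,dk$, and a BDG/localization step for the supremum. The organization differs slightly. The paper proves (\ref{bound Y^n }) in a single pass: a direct It\^o computation in which the favourable term $\beta\int|Y^n|^2dk^n$ is kept on the left throughout (so that the $dk^n$-integral estimate falls out of the same inequality), followed by Gronwall's lemma to handle the $|Y^n|^2\,dr$ term and then BDG. You instead first invoke Lemma \ref{Lemma crucial} --- whose exponential weight $e^{V}$ plays the role of Gronwall --- to obtain the sup- and $Z$-estimates, discarding the $\beta|Y|^2dk^n$ term, and then make a second It\^o pass to recover the $dk^n$-integral; this is valid (hypothesis (ii) of the lemma holds for each fixed $n$ by the existence theory, so there is no circularity, and the right-hand side is uniform in $n$ by (A.4)(vi) and (\ref{boundXn})), but costs an extra pass where the paper's direct computation is self-contained and shorter. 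For (\ref{bound Y}) both you and the paper defer to an external $L^q$ a priori estimate for generalized BSDEs: the paper cites \cite[Proposition A.2]{MatRas} rather than an $L^q$ version of \cite[Proposition 6.80, Annex C]{PardRas2014}, but the content is the same, and your identification of Remark \ref{rq cv}(ii) as the source of the required moments of $X^{t,x}$ and $k^{t,x}$ is consistent with what that estimate needs.
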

\begin{proof}
We prove the first assertion. It\^o's formula gives
\begin{eqnarray*}
&&|Y^{t,x,n}_s|^2+\int_{s}^{T}\|Z^{t,x,n}_r\|^2dr=|g(X^{t,x,n}_T)|^2\\
&&\qquad+2\int_{t}^{T}<Y^{t,x,n}_r,f(r,X^{t,x,n}_r,Y^{t,x,n}_r,Z^{t,x,n}_r)>dr\\
&&\quad\quad+2\int_{t}^{T}<Y^{t,x,n}_r,h(r,X^{t,x,n}_r,Y^{t,x,n}_r)>dk^{t,x,n}_r-2\int_{t}^{T}<Y^{t,x,n}_r,Z^{t,x,n}_rdW_r>.
\end{eqnarray*}
Using assumption (A.4), we find
\begin{eqnarray*}
	&&|Y^{t,x,n}_s|^2+\int_{s}^{T}\|Z^{t,x,n}_r\|^2dr\le C^2T+2C^2+2C^2|X^{t,x,n}_T|^2\\
	&&\qquad\qquad\quad+(1+2\mu_f+2l_f^2)\int_{s}^{T}|Y^{t,x,n}_r|^2dr+2\int_{s}^{T}\left( \beta|Y^{t,x,n}_r|^2+C|Y^{t,x,n}_r|\right) dk^{t,x,n}_r\\
	&&\qquad\qquad\qquad\qquad\qquad\qquad-2\int_{s}^{T}<Y^{t,x,n}_r,Z^{t,x,n}_r dW_r>.
\end{eqnarray*}
Since  $-\beta > 0$, we use the inequality $2ab\le (-\beta) a^2+\frac{b^2}{(-\beta)}$,\; to get
\begin{eqnarray*}
	&&|Y^{t,x,n}_s|^2+\int_{s}^{T}\|Z^{t,x,n}_r\|^2dr\le C^2T+C^2+C^2|X^{t,x,n}_T|^2\\
	&&\qquad\qquad\qquad +(1+2\mu_f+2l_f^2)\int_{s}^{T}|Y^{t,x,n}_r|^2dr -\frac{C^2}{\beta}k^{t,x,n}_T+\beta \int_{s}^{T}|Y^{t,x,n}_r|^2dk^{t,x,n}_r\\
	&&\qquad\qquad\qquad\qquad\qquad\qquad -2\int_{s}^{T}<Y^{t,x,n}_r,Z^{t,x,n}_rdW_r>.
\end{eqnarray*}
Since the process $(\int_{0}^{.}<Y^{t,x,n}_r,Z^{t,x,n}_r)dW_r>)$ is a uniformly integrable martingale, we take expectation in the previous inequality to show that
\begin{eqnarray*}
	&&\mathbb{E}\left( |Y^{t,x,n}_s|^2+|\beta| \int_{s}^{T}|Y^{t,x,n}_r|^2dk^{t,x,n}_r+\int_{s}^{T}\|Z^{t,x,n}_r\|^2dr\right) \le C^2T+C^2+C^2\mathbb{E}|X^{t,x,n}_T|^2\\
	&&\qquad \qquad\qquad +\frac{C^2}{|\beta|}\mathbb{E}k^{t,x,n}_T
	+(1+2\mu_f+2l_f^2)\mathbb{E}\int_{s}^{T}|Y^{t,x,n}_r|^2dr.
\end{eqnarray*}
Using estimate (\ref{boundXn}) and Gronwall's inequality, we obtain
\begin{equation*}
\sup_{n\ge 1}\sup_{s\in[t,T]}\mathbb{E}\left( |Y^{t,x,n}_s|^2+ \int_{s}^{T}|Y^{t,x,n}_r|^2dk^{t,x,n}_r+\int_{s}^{T}\|Z^{t,x,n}_r\|^2dr\right)<+\infty.
\end{equation*}
Burkholder-Davis-Gundy inequality shows that
\begin{equation*}
\sup_{n\ge 1}\mathbb{E}\left(\sup_{s\in[t,T]} |Y^{t,x,n}_s|^2+ \int_{t}^{T}|Y^{t,x,n}_r|^2dk^{t,x,n}_r+\int_{t}^{T}\|Z^{t,x,n}_r\|^2dr\right)<+\infty.
\end{equation*}
 Inequality \eqref{bound Y^n } is proved.
 Using \cite[Proposition A.2]{MatRas}, we prove    inequality  (\ref{bound Y}).
\end{proof}

\vskip 0.2cm
We extend the processes $(Y^{t,x,n},Z^{t,x,n})$ and $(Y^{t,x},Z^{t,x})$ to $[0,t)$ as follows
\begin{equation}\label{YZsoust}
	Y^{t,x,n}_s:=Y^{t,x,n}_t,\quad Y^{t,x}_s:=Y^{t,x}_t\quad\text{and}\quad Z^{t,x,n}_s=Z^{t,x}_s:=0, \quad  s\, \in\, [0,t).
\end{equation}
\section{Penalization of the nonlinear Neumann PDE}
We divide this section into two parts. The first one concerns the convergence of the solution of the BSDE (\ref{bakseq}). The second one is an application of our convergence to the nonlinear Neumann boundary problem.
\subsection{Convergence of the penalized BSDE}
For $(t,x)\in [0,T]\times\bar{D}$, let $(Y^{t,x,n}_s,Z^{t,x,n}_s)_{s\in [0,T]}$ and $(Y^{t,x}_s,Z^{t,x}_s)_{s\in [0,T]}$ be  respectively, the solutions of BSDEs (\ref{bakseq}) and (\ref{baklimit}). Our first main result is
\begin{theorem}\label{cvgcebsde}
	Let assumptions \rm{(A.1)} and \rm{(A.4)} hold.  Then, we have the following convergence
	\begin{equation*}
		\mathbb{E}\left(\sup_{r\in[0,T]}|Y^{t,x,n}_r-Y^{t,x}_r|^2 +\int_{0}^{T}\|Z^{t,x,n}_r-Z^{t,x}_r\|^2dr \right)\to 0,\quad \mbox{as}\,\, n\to +\infty.
	\end{equation*}
\end{theorem}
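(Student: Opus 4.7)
The plan is to apply Lemma \ref{Lemma crucial} to the pair $(\Delta Y, \Delta Z) := (Y^{t,x,n}-Y^{t,x},\, Z^{t,x,n}-Z^{t,x})$, which satisfies the BSDE obtained by subtracting (\ref{baklimit}) from (\ref{bakseq}), with terminal value $g(X^{t,x,n}_T)-g(X^{t,x}_T)$ and a drift encoding the differences of the generators and of the boundary measures. In this way every remaining estimate is reduced to forward-side information already supplied by Theorem \ref{Tanaka-Slominski}, Lemma \ref{lemma3} and Remark \ref{rq cv}.

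To set up the measure-valued inequality (i) of Lemma \ref{Lemma crucial}, first I would expand $f(r,X^n,Y^n,Z^n)-f(r,X,Y,Z)$ by adding and subtracting $f(r,X^n,Y,Z^n)$ and $f(r,X^n,Y,Z)$, and use (A.4)(i)--(ii) together with $l_f|\Delta Y|\|\Delta Z\|\le \tfrac14\|\Delta Z\|^2 + l_f^2|\Delta Y|^2$ to obtain
\[
\langle \Delta Y, f^n-f\rangle\,dr \le (\mu_f+l_f^2)|\Delta Y|^2\,dr + \tfrac14\|\Delta Z\|^2\,dr + |\Delta Y|\,|f(r,X^n,Y,Z)-f(r,X,Y,Z)|\,dr.
\]
For the boundary term I would split $h(r,X^n,Y^n)\,dk^n - h(r,X,Y)\,dk$ into the three natural pieces: the first contributes the nonpositive $\beta|\Delta Y|^2\,dk^n$ via (A.4)(iv) and is discarded, the second fits into $|\Delta Y|\,dL$, and the third is $h(r,X,Y)(dk^n-dk)$. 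This suggests the choices $\alpha=1/2$, $dV_r=(\mu_f+l_f^2)\,dr$, $dL_r = |f(r,X^n,Y,Z)-f(r,X,Y,Z)|\,dr + |h(r,X^n,Y)-h(r,X,Y)|\,dk^n_r$, and $dR_r = \langle \Delta Y_r,h(r,X_r,Y_r)\rangle(dk^n_r-dk_r)$ (replaced by its positive part if one insists on a monotone $R$), whose integrability is controlled by (\ref{boundXn}), (\ref{bound Y^n }), (\ref{bound Y}) and Remark \ref{rq cv}(ii).

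The conclusion of Lemma \ref{Lemma crucial} then reduces the theorem to three convergences as $n\to\infty$. The terminal one, $\mathbb{E}|g(X^n_T)-g(X_T)|^2 \to 0$, is immediate from continuity and linear growth of $g$, the moment bound (\ref{boundXn}), and Theorem \ref{Tanaka-Slominski}(ii). The source convergence $\mathbb{E}(\int_0^T e^{V_r}\,dL_r)^2 \to 0$ reduces, via Cauchy--Schwarz and the uniform bound on $\mathbb{E}(k^n_T)^q$, to $L^2$-convergence of $|f(r,X^n,Y,Z)-f(r,X,Y,Z)|$ in $dr$ and of $|h(r,X^n,Y)-h(r,X,Y)|$ in $dk^n_r$, and both follow from continuity of $f,h$ in $x$, dominated convergence, Theorem \ref{Tanaka-Slominski}(ii) and the a priori bound (\ref{bound Y}). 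The hard part will be the residual $\mathbb{E}\sup_{s\in[0,T]}\bigl|\int_s^T e^{2V_r}\langle \Delta Y_r, h(r,X_r,Y_r)\rangle(dk^n_r-dk_r)\bigr|$, since $dk^n-dk$ is only a signed measure with no useful variation estimate in $n$. I would attack it by approximating $(r,\omega)\mapsto h(r,X^{t,x}_r,Y^{t,x}_r)$ in $L^2(d\mathbb{P}\otimes(dk^n+dk))$ by smooth cylindrical integrands of the form $\varphi(X^{t,x}_r)$ with $\varphi\in\mathcal{C}^1_b(\mathbb{R}^d)$; for such $\varphi$, Lemma \ref{lemma3}(ii) together with Remark \ref{rq cv}(i) yields $\mathbb{E}\sup_s|\int_s^T \varphi\,d(k^n-k)|^q\to 0$, while the approximation error is controlled uniformly in $n$ by the linear growth of $h$, Cauchy--Schwarz against the uniform $L^2$-bound on $\sup_s|\Delta Y_s|$ coming from (\ref{bound Y^n }) and (\ref{bound Y}), and the uniform $L^q$-bounds on $k^n, k$ of (\ref{boundXn}) and Remark \ref{rq cv}(ii). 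A standard $\varepsilon$-approximation/diagonal argument then closes the estimate and completes the proof.
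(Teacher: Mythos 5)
Your setup is exactly the paper's: the same application of Lemma \ref{Lemma crucial} with the same $V$, $L^n$ and $R^n$, and your treatment of the terminal term and of $\mathbb{E}\bigl(\int_0^T e^{\lambda r}dL^n_r\bigr)^2$ matches Lemmas \ref{lem1} and \ref{lem2}. The gap is in the last term. The measure $dR^n_r=\langle Y^n_r-Y_r,\,h(r,X_r,Y_r)\rangle\,(dk^n_r-dk_r)$ carries the $n$-dependent, non-Markovian factor $Y^n_r-Y_r$ \emph{inside} the integrand. After you replace $h(r,X_r,Y_r)$ by a cylindrical $\varphi(X_r)$, the quantity you must control is still $\int_s^T e^{2\lambda r}\langle Y^n_r-Y_r,\varphi(X_r)\rangle\,(dk^n_r-dk_r)$, not $\int_s^T \varphi\,d(k^n-k)$ as you write; Lemma \ref{lemma3}(ii) and Remark \ref{rq cv}(i) say nothing about integrands that are not functions of $X^n_r$ or $X_r$ alone, and in particular nothing about integrands depending on $n$ through $Y^n$. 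Nor can you simply discard that factor: the crude bound $\bigl|\int_s^T\psi_r\,d(k^n-k)_r\bigr|\le\sup_r|\psi_r|\,(k^n_T+k_T)$ involves the total variation of the signed measure $dk^n-dk$, which stays bounded away from zero even though $\sup_s|k^n_s-k_s|\to 0$. So the proposed $\varepsilon$-approximation/diagonal argument does not close.

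The paper's way around this is the content of Lemmas \ref{lem3} and \ref{lem4}: freeze both $Y^n-Y$ and $h$ on a partition $\{r_i\}$ of mesh $T/N$, so that the leading piece becomes a finite sum $\sum_i \langle Y^n_{r_{i-1}}-Y_{r_{i-1}},h_{r_{i-1}}\rangle\int_{s\wedge r_{i-1}}^{r_i}e^{2\lambda r}\,d(k^n_r-k_r)$, where the inner integral is controlled by the deterministic, bounded-variation weight $e^{2\lambda r}$ and the convergence $\mathbb{E}\sup_s|k^n_s-k_s|^q\to 0$. The price is the error term $\mathbb{E}\int_0^T|Y^n_r-Y^{n,N}_r|^q(dk^n_r+dk_r)$, which must be estimated uniformly in $n$ through the BSDE itself (linear growth of $f,h$, the a priori bounds (\ref{bound Y^n }), and Burkholder--Davis--Gundy for $\int Z^n\,dW$), yielding a bound of order $N^{-q/2}$ plus the modulus of continuity of $k$ over the partition, which vanishes as $N\to\infty$. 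This discretization step is the essential idea missing from your proposal; without it (or an equivalent device with uniform-in-$n$ control of $d(Y^n-Y)$), the convergence of the $R^n$ term is not established.
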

\begin{proof} We adapt the proof of \cite[Theorem 3.1]{PardRas} to our situation by bringing some modifications. From now on, we suppress the superscripts $(t,x)$, and $C$ will denote a nonnegative constant, which may vary from one line to another, but does not depend on $n$. We shall apply Lemma \ref{Lemma crucial} to the following BSDE
\begin{eqnarray*}
Y^n_s-Y_s&= & g(X^n_T)-g(X_T)+\int_{s}^{T}d\mathcal{K}^n_r-\int_{s}^{T}(Z^n_r-Z_r) dW_r
\end{eqnarray*}	
where
\begin{eqnarray*}
d\mathcal{K}^n_r&:=&\left[f(r,X^n_r,Y^n_r,Z^n_r)-f(r,X_r,Y_r,Z_r)\right]dr+h(r,X^n_r,Y^n_r)dk^n_r-h(r,X_r,Y_r)dk_r.
\end{eqnarray*}
Using (A.4)(i)-(A.4)(ii), we get for every $0\leq t\leq s\leq T$,
\begin{eqnarray*}
&& \int_{s}^{T}<Y^n_r-Y_r,f(r,X^n_r,Y^n_r,Z^n_r)-f(r,X_r,Y_r,Z_r)>dr\\
&&= \int_{s}^{T} <Y^n_r-Y_r,f(r,X^n_r,Y^n_r,Z^n_r)-f(r,X^n_r,Y_r,Z^n_r)>dr\\
&& \quad + \int_{s}^{T} <Y^n_r-Y_r,f(r,X^n_r,Y_r,Z^n_r)-f(r,X^n_r,Y_r,Z_r)>dr\\
 && \quad + \int_{s}^{T} <Y^n_r-Y_r,f(r,X^n_r,Y_r,Z_r)-f(r,X_r,Y_r,Z_r)>dr\\
&& \le
\int_{s}^{T} \mu_f|Y^n_r-Y_r|^2dr+ l_f|Y^n_r-Y_r|\,\|Z^n_r-Z_r\|dr\\
&& \quad + \int_{s}^{T} |Y^n_r-Y_r||f(r,X^n_r,Y_r,Z_r)-f(r,X_r,Y_r,Z_r)|dr\\
&& \le
\int_{s}^{T} (l_f^2+\mu_f)|Y^n_r-Y_r|^2dr+\frac{1}{4}\|Z^n_r-Z_r\|^2dr\\
&& \quad + \int_{s}^{T} |Y^n_r-Y_r||f(r,X^n_r,Y_r,Z_r)-f(r,X_r,Y_r,Z_r)|dr.
\end{eqnarray*}
On the other hand, thanks to assumption (A.4)(iv), we obtain
\begin{eqnarray*}
&& \int_{s}^{T} <Y^n_r-Y_r,h(r,X^n_r,Y^n_r)dk^n_r-h(r,X_r,Y_r)dk_r>\\
&&= \int_{s}^{T} <Y^n_r-Y_r, h(r,X^n_r,Y^n_r)- h(r,X^n_r,Y_r)>dk^n_r\\
&& \quad + \int_{s}^{T} <Y^n_r-Y_r,h(r,X^n_r,Y_r)- h(r,X_r,Y_r)>dk^n_r\\
&& \quad +\int_{s}^{T} <Y^n_r-Y_r,h(r,X_r,Y_r)(dk^n_r-dk_r)>\\
&& \le \beta\int_{s}^{T} |Y_r-Y^n_r|^2 dk^n_r + \int_{s}^{T} |Y^n_r-Y_r||h(r,X^n_r,Y_r)-
h(r,X_r,Y_r)|dk^n_r\\
&& \quad + \int_{s}^{T}<Y^n_r-Y_r,h(r,X_r,Y_r)>(dk^n_r-dk_r)\\
&&\le \int_{s}^{T} |Y^n_r-Y_r||h(r,X^n_r,Y_r)-
h(r,X_r,Y_r)|dk^n_r + \int_{s}^{T} <Y^n_r-Y_r,h(r,X_r,Y_r)>(dk^n_r-dk_r).
\end{eqnarray*}	

By the foregoing, it holds that, for $\lambda=(l_f^2+\mu_f)\vee l_f^2$,
\begin{eqnarray*}
<Y^n_r-Y_r, d\mathcal{K}^n_r>&\le & \frac{1}{4}\|Z^n_r-Z_r\|^2dr+\lambda|Y^n_r-Y_r|^2 dr\\
&&+|Y^n_r-Y_r||h(r,X^n_r,Y_r)-
h(r,X_r,Y_r)|dk^n_r\\
&&+|Y^n_r-Y_r||f(r,X^n_r,Y_r,Z_r)-f(r,X_r,Y_r,Z_r)|dr\\
&&+<Y^n_r-Y_r,h(r,X_r,Y_r)>(dk^n_r-dk_r)\\
&=& \frac{1}{4}\|Z^n_r-Z_r\|^2dr+\lambda|Y^n_r-Y_r|^2 dr+|Y^n_r-Y_r|dL^n_r+dR^n_r ,
\end{eqnarray*}
where $L^n$ and $R^n$ are defined by
\begin{eqnarray}
dL^n_r&:=&|f(r,X^n_r,Y_r,Z_r)-f(r,X_r,Y_r,Z_r)|dr+|h(r,X^n_r,Y_r)-
h(r,X_r,Y_r)|dk^n_r\label{L^n}\\
dR^n_r&:=& <Y^n_r-Y_r,h(r,X_r,Y_r)>(dk^n_r-dk_r).\label{R^n}
\end{eqnarray}
Therefore, by Lemma \ref{Lemma crucial}, there exist positive constants $C_1,C_2$ and $C_3$ such that
\begin{eqnarray}\label{main ineq}
&&\mathbb{E}\left(\sup_{r\in[0,T]}e^{2\lambda r}|Y^n_r-Y_r|^2 \right)+\mathbb{E}\left(\int_{0}^{T}e^{2\lambda r}\|Z^n_r-Z_r\|^2dr \right) \\
&&\le C_1\,\mathbb{E}e^{2\lambda T}|g(X^n_T)-g(X_T)|^2+C_2\mathbb{E}\left(\int_{0}^{T}e^{\lambda r}dL^n_r \right)^2+C_3\mathbb{E} \sup_{s\in [0,T]}\int_{s}^{T}e^{2\lambda r}dR^n_r. \nonumber
\end{eqnarray}
We shall give several auxiliary assertions ensuring that the right-hand side term of the previous inequality converges to zero as $n$ goes to $+\infty$.
\begin{lemma}\label{lem1}
	Under assumptions \rm{(A.1)} and \rm{(A.4)(vi)},  the following convergence holds
	$$\lim_{n\to \infty}\mathbb{E}\left( e^{2\lambda T}|g(X^n_T)-g(X_T)|^2\right)=0.$$
\end{lemma}
\begin{proof}{}
Taking into account the convergence of $X^n_T$ to $X_T$, the continuity of $g$, assumption (A.4)(vi)  and estimates (\ref{boundXn}) and  (\ref{bound X and k}), the result follows by using the Lebesgue dominated convergence theorem.
\end{proof}
\begin{lemma}\label{lem2}
	Let $L^n$ be the processes given by equation (\ref{L^n}).  Assume that {\rm{(A.1)} and \rm{(A.4)}} are satisfied. Then,
	$$\lim_{n\to \infty}\mathbb{E}\left(\int_{0}^{T}e^{\lambda r}dL^n_r \right)^2=0 .$$
\end{lemma}
\begin{proof} Using the triangular inequality, we obtain
	\begin{eqnarray*}
		\mathbb{E}\left(\int_{0}^{T}e^{\lambda r}dL^n_r \right)^2&\le& 2\mathbb{E}\left(\int_{0}^{T}e^{\lambda r}|f(r,X^n_r,Y_r,Z_r)-f(r,X_r,Y_r,Z_r)|dr \right)^2\\ &&+2\mathbb{E}\left(\int_{0}^{T}e^{\lambda r}|h(r,X^n_r,Y_r)-
		h(r,X_r,Y_r)|dk^n_r \right)^2\\
		&:=& I^n_1+I^n_2.
	\end{eqnarray*}	
We shall show that $I^n_1$ and $I^n_2$ tend to zero as $n$ tends to $\infty$. H\"older's inequality leads to
 \begin{eqnarray}\label{I^1 pena}
 I^n_1&=& \mathbb{E}\left(\int_{0}^{T}e^{\lambda r}|f(r,X^n_r,Y_r,Z_r)-f(r,X_r,Y_r,Z_r)|dr \right)^2 \nonumber\\
 &\le& 2 Te^{2\lambda T} \mathbb{E}\left(\int_{0}^{T}|f(r,X^n_r,Y_r,Z_r)-f(r,X_r,Y_r,Z_r)|^2dr \right).
 \end{eqnarray}
Again by the convergence of $X^n$ to $X$ in each $L^q$ with respect to the uniform norm and the continuity of $f$ we deduce that the sequence $|f(r,X^n_r,Y_r,Z_r)-f(r,X_r,Y_r,Z_r)|^2$ converges to zero in probability, $a.e. \;r  \in [0,T]$. Since by assumptions (A.4)(ii) and (A.4)(iii) on $f$ we have
$$|f(r,X^n_r,Y_r,Z_r)-f(r,X_r,Y_r,Z_r)|^2\le C(1+|Y_r|^2+\|Z_r\|^2),\quad a.e.\; r\in [0,T],$$
it follows that
$$\mathbb{E}|f(r,X^n_r,Y_r,Z_r)-f(r,X_r,Y_r,Z_r)|^2\to 0,\quad \mbox{as}\quad n\to +\infty,\quad a.e.\; r\in [0,T].$$
Using the Lebesgue dominated convergence theorem, we get $lim_{n\to +\infty}I^n_1=0$. Concerning $I^n_2$, H\"older's inequality yields
 \begin{eqnarray}\label{I^2 pena}
 I^n_2&=& 2\mathbb{E}\left(\int_{0}^{T}e^{\lambda r}|h(r,X^n_r,Y_r)-
 h(r,X_r,Y_r)|dk^n_r \right)^2 \nonumber\\
 &\le & 2\,e^{2\lambda T}\left(\mathbb{E}\sup_{r\in [0,T]}|h(r,X^n_r,Y_r)-
 h(r,X_r,Y_r)|^4\right)^{\frac{1}{2}} \left(\sup_{n\ge 1} \mathbb{E}\left( k^n_T\right) ^4\right)^{\frac{1}{2}}.
 \end{eqnarray}
 On the other hand, by the linear growth assumption on $h$, we have for each $q>1$
 $$\mathbb{E}\sup_{r\in [0,T]}|h(r,X^n_r,Y_r)-
 h(r,X_r,Y_r)|^{4q}\le C(1+\mathbb{E}\sup_{r\in [0,T]}|Y_r|^{4q}).$$
 It follows from estimates (\ref{bound Y}) that the sequence of random variables\\ $sup_{r\in [0,T]}|h(r,X^n_r,Y_r)-
 h(r,X_r,Y_r)|^4$ is uniformly integrable. Since
 $X^n$ converges to $X$ in each $L^q$ for the uniform norm, we deduce that the sequence $sup_{r\in [0,T]}|h(r,X^n_r,Y_r)-
 h(r,X_r,Y_r)|^4$ converges to zero in probability as $n$ goes to $+\infty$. This combined with estimate (\ref{boundXn}) ensure that $lim_{n\to +\infty}I^n_2=0$.  Lemma \ref{lem2} is proved.
\end{proof}

We will show an estimate for the solution $Y^n$ that will be used to control the term $\mathbb{E} \sup_{s\in [0,T]}\int_{s}^{T}e^{2\lambda r}dR^n_r$. To this end, let $N\in \mathbb{N},\ N>T$ and the partition of $[0,T]$, $r_i=\frac{iT}{N}$ $i=0,...,N$. We put  $r/N:=\max\{r_i;\, r_i\le r\}$. Given a continuous stochastic process $(H_r)_{r\in [0,T]}$, we define
$$H^N_r:=\sum_{i=0}^{N-1}H_{r_i}1_{[r_i,r_{i+1})}(r)+H_T1_{\{T\}}(r)=H_{r/N}.$$
\begin{lemma}\label{lem3} Assume {\rm{(A.1)} and \rm{(A.4)}} hold.
	 Then, for any $q\in ]1, \ 2[$, there exists a positive constant $C$ depending  on $T$, $q$ and independent  on $N$, such that:
	\begin{eqnarray*}
		\limsup_{n\to \infty}\mathbb{E}\left(\int_{0}^{T}|Y^n_r-Y^{n,N}_r|^q(dk^n_r+dk_r) \right)\le \frac{C}{N^{q/2}}+ C\left[\mathbb{E} \max_{i=1,\hdots, N}\left(k_{r_i}-k_{r_{i-1}} \right)^{\frac{2q}{2-q}}  \right]^{\frac{2-q}{4}}.
	\end{eqnarray*}
\end{lemma}
\begin{proof}
	We write BSDE (\ref{bakseq}) between $s/N$ and $s$
	\begin{eqnarray*}
		Y^{n,N}_s&=& Y^n_s+\int_{s/N}^{s}f(r,X^n_r,Y^n_r,Z^n_r)dr\\
		&&+\int_{s/N}^{s}h(r,X^n_r,Y^n_r)dk^n_r-\int_{s/N}^{s}Z^n_rdW_r.
	\end{eqnarray*}
	H\"older's inequality gives
	\begin{eqnarray*}
		|Y^{n,N}_s-Y^n_s|^q&\le& \frac{C}{N^{q/2}}\left[\int_{s/N}^{s}|f(r,X^n_r,Y^n_r,Z^n_r)|^2 dr \right]^{q/2}\\ &&+C\left(k^n_s-k^n_{s/N} \right)^{q/2} \left[ \int_{s/N}^{s}|h(r,X^n_r,Y^n_r)|^2dk^n_r\right]^{q/2}
		+C\left|\int_{s/N}^{s}Z^n_rdW_r \right|^q.
	\end{eqnarray*}
	It follows that
	\begin{eqnarray*}
		\mathbb{E}\left(\int_{0}^{T}|Y^n_r-Y^{n,N}_r|^q(dk^n_r+dk_r) \right)\le J^{n,N}_1+J^{n,N}_2+J^{n,N}_3
	\end{eqnarray*}
	where
	\begin{eqnarray*}
		 J^{n,N}_1:=\frac{C}{N^{q/2}}\mathbb{E}\int_{0}^{T}\left[\int_{s/N}^{s}|f(r,X^n_r,Y^n_r,Z^n_r)|^2 dr \right]^{q/2}(dk^n_s+dk_s) ,
	\end{eqnarray*}
	\begin{eqnarray*}
	J^{n,N}_2:= C\,\mathbb{E}\int_{0}^{T}(k^n_s-k^n_{s/N})^{q/2}\left(\int_{s/N}^{s}|h(r,X^n_r,Y^n_r)|^2dk^n_r \right)^{q/2}(dk^n_s+dk_s) ,
	\end{eqnarray*}
	\begin{eqnarray*}
	J^{n,N}_3:= C\,\mathbb{E}\int_{0}^{T}\left|\int_{s/N}^{s}Z^n_rdW_r\right|^q (dk^n_s+dk_s).
	\end{eqnarray*}
	We shall estimate $J^{n,N}_1$, $J^{n,N}_2$ and $J^{n,N}_3$. We use H\"older's inequality to obtain
	\begin{eqnarray*}
		 J^{n,N}_1&=&\frac{C}{N^{q/2}}\mathbb{E}\int_{0}^{T}\left[\int_{s/N}^{s}|f(r,X^n_r,Y^n_r,Z^n_r)|^2 dr \right]^{q/2}(dk^n_s+dk_s)\\
		&\le & \frac{C}{N^{q/2}} \mathbb{E}\left((k^n_T+k_T)\left[\int_{0}^{T}|f(r,X^n_r,Y^n_r,Z^n_r)|^2 dr \right]^{q/2} \right) \\
		&\le & \frac{C}{N^{q/2}}\left( \mathbb{E}(k^n_T+k_T)^{\frac{2}{2-q}}\right)^{\frac{2-q}{2}} \left( \mathbb{E}\int_{0}^{T}|f(r,X^n_r,Y^n_r,Z^n_r)|^2 dr  \right)^{q/2}.
	\end{eqnarray*}
	 By the linear growth of $f$ in its third variable and Lipschitz continuity with respect to the fourth argument, we get
	\begin{eqnarray*}
	J^{n,N}_1&\le& \frac{C}{N^{q/2}}\left( \mathbb{E}(k^n_T+k_T)^{\frac{2}{2-q}}\right)^{\frac{2-q}{2}} \left(1+\mathbb{E}\sup_{r\in [0,T]}|Y^n_r|^2 + \mathbb{E}\int_{0}^{T}\|Z^n_r\|^2 dr \right)^{q/2}\\
		&\le& \frac{C}{N^{q/2}}
	\end{eqnarray*}
	where in the last line we have used inequalities (\ref{boundXn}), (\ref{bound X and k}) and (\ref{bound Y^n }).
	
	Concerning $J^{n,N}_2$, we use H\"older's inequality, the monotony of $k^n$ and the linear growth condition on $h$, to obtain
	\begin{eqnarray*}
		&& J^{n,N}_2= C\,\mathbb{E}\int_{0}^{T}(k^n_s-k^n_{s/N})^{q/2}\left(\int_{s/N}^{s}|h(r,X^n_r,Y^n_r)|^2dk^n_r \right)^{q/2}(dk^n_s+dk_s)\\
		&\le& C\,\mathbb{E}\left(\int_{0}^{T}|h(r,X^n_r,Y^n_r)|^2dk^n_r \right)^{q/2}\sum_{i=1}^{N}\int_{r_{i-1}}^{r_i}(k^n_s-k^n_{s/N})^{q/2}(dk^n_s+dk_s)\\
		&\le& C\left(\mathbb{E}\int_{0}^{T}|h(r,X^n_r,Y^n_r)|^2dk^n_r \right)^{q/2}\left[ \mathbb{E}\left( \sum_{i=1}^{N}\int_{r_{i-1}}^{r_i}(k^n_s-k^n_{s/N})^{q/2}(dk^n_s+dk_s)\right)^{\frac{2}{2-q}}\right]^{\frac{2-q}{2}} \\
		&\le & C\left(\mathbb{E}\int_{0}^{T}|h(r,X^n_r,Y^n_r)|^2dk^n_r \right)^{q/2}\left[ \mathbb{E}\left( \sum_{i=1}^{N}(k^n_{r_i}-k^n_{r_{i-1}})^{q/2}(k^n_{r_i}+k_{r_i}-k^n_{r_{i-1}}-k_{r_{i-1}})\right)^{\frac{2}{2-q}}\right]^{\frac{2-q}{2}} \\
		&\le & C\left(\mathbb{E}\int_{0}^{T}1+|Y^n_r|^2dk^n_r \right)^{q/2}\left[ \mathbb{E}\left( \sum_{i=1}^{N}(k^n_{r_i}-k^n_{r_{i-1}})^{q/2}(k^n_{r_i}+k_{r_i}-k^n_{r_{i-1}}-k_{r_{i-1}})\right)^{\frac{2}{2-q}}\right]^{\frac{2-q}{2}} \\
		&\le & C\left(\mathbb{E}k^n_T+\mathbb{E}\int_{0}^{T}|Y^n_r|^2dk^n_r \right)^{q/2}\left[ \mathbb{E}\left( \sum_{i=1}^{N}(k^n_{r_i}-k^n_{r_{i-1}})^{q/2}(k^n_{r_i}+k_{r_i}-k^n_{r_{i-1}}-k_{r_{i-1}})\right)^{\frac{2}{2-q}}\right]^{\frac{2-q}{2}}.
	\end{eqnarray*}
	Hence, by inequalities (\ref{boundXn}) and (\ref{bound Y^n }), we see that
	\begin{eqnarray*}
		J^{n,N}_2&\le & C\left[ \mathbb{E}\left( \sum_{i=1}^{N}(k^n_{r_i}-k^n_{r_{i-1}})^{q/2}(k^n_{r_i}+k_{r_i}-k^n_{r_{i-1}}-k_{r_{i-1}})\right)^{\frac{2}{2-q}}\right]^{\frac{2-q}{2}}.
	\end{eqnarray*}
	Taking into account the convergence of $k^n$ to $k$ (Remark \ref{rq cv} (i)), estimates (\ref{boundXn}) and (\ref{bound X and k}), we pass to the limit as $n$ goes $+\infty$ then we use the Lebesgue dominated convergence theorem to get
	\begin{align*}
		\limsup_{n\to \infty}J^{n,N}_2&\le  C\left[ \mathbb{E}\left( \sum_{i=1}^{N}(k_{r_i}-k_{r_{i-1}})^{q/2}(k_{r_i}-k_{r_{i-1}})\right)^{\frac{2}{2-q}}\right]^{\frac{2-q}{2}}
  \\
		&\le  C\left[ \mathbb{E}\left( \max_{i=1,\hdots, N}(k_{r_i}-k_{r_{i-1}})^{q/2}k_T\right)^{\frac{2}{2-q}}\right]^{\frac{2-q}{2}}
 \\
		&\le   C\left[ \mathbb{E} \max_{i=1,\hdots, N}(k_{r_i}-k_{r_{i-1}})^{\frac{q}{2-q}}k_T^{\frac{2}{2-q}}\right]^{\frac{2-q}{2}}
 \\
		&\le  C\left[ \mathbb{E} \max_{i=1,\hdots, N}(k_{r_i}-k_{r_{i-1}})^{\frac{2q}{2-q}}\right]^{\frac{2-q}{4}}\left[ \mathbb{E}k_T^{\frac{4}{2-q}}\right]^{\frac{2-q}{4}}.
	\end{align*}
	For $J^{n,N}_3$, we have
	\begin{eqnarray*}
		J^{n,N}_3&=& C\,\mathbb{E}\int_{0}^{T}\left|\int_{s/N}^{s}Z^n_rdW_r\right|^q (dk^n_s+dk_s)\\
		&=& C\,\mathbb{E}\sum_{i=1}^{N}\int_{r_{i-1}}^{r_i}\left|\int_{s/N}^{s}Z^n_rdW_r\right|^q (dk^n_s+dk_s)\\
		&\le & C\,\sum_{i=1}^{N}\mathbb{E}\sup_{s\in[r_{i-1},r_i]}\left|\int_{s/N}^{s}Z^n_rdW_r\right|^q(k^n_{r_i}-k^n_{r_{i-1}}+k_{r_i}-k_{r_{i-1}})\\
		&\le & C \sum_{i=1}^{N}\left(\mathbb{E}\sup_{s\in[r_{i-1},r_i]}\left|\int_{r_{i-1}}^{s}Z^n_rdW_r\right|^2 \right)^{\frac{q}{2}}\left( \mathbb{E}(k^n_{r_i}-k^n_{r_{i-1}}+k_{r_i}-k_{r_{i-1}})^{\frac{2}{2-q}} \right)^{\frac{2-q}{2}}.\\
	\end{eqnarray*}
	Using the Burkholder-Davis-Gundy inequality, we obtain
	\begin{eqnarray*}
		J^{n,N}_3&\le & C \sum_{i=1}^{N}\left(\mathbb{E}\int_{r_{i-1}}^{r_i}\left\|Z^n_r\right\|^2dr \right)^{\frac{q}{2}}\left( \mathbb{E}(k^n_{r_i}-k^n_{r_{i-1}}+k_{r_i}-k_{r_{i-1}})^{\frac{2}{2-q}} \right)^{\frac{2-q}{2}}.
	\end{eqnarray*}
	Again by H\"older's inequality, we find
	\begin{eqnarray*}
		J^{n,N}_3&\le & C \left(\sum_{i=1}^{N}\mathbb{E}\int_{r_{i-1}}^{r_i}\left\|Z^n_r\right\|^2dr \right)^{\frac{q}{2}}\left(\sum_{i=1}^{N} \mathbb{E}(k^n_{r_i}-k^n_{r_{i-1}}+k_{r_i}-k_{r_{i-1}})^{\frac{2}{2-q}} \right)^{\frac{2-q}{2}}\\
		&\le & C\left(\mathbb{E}\int_{0}^{T}\left\|Z^n_r\right\|^2dr \right)^{\frac{q}{2}}\left(\sum_{i=1}^{N} \mathbb{E}(k^n_{r_i}-k^n_{r_{i-1}}+k_{r_i}-k_{r_{i-1}})^{\frac{2}{2-q}} \right)^{\frac{2-q}{2}}.
	\end{eqnarray*}
	Keeping in mind inequality (\ref{bound Y^n }) and the convergence of $k^n$ to $k$, we pass to the limit as $n\to+\infty$, to obtain
	\begin{eqnarray*}
		\lim_{n\to \infty}J^{n,N}_3&\le& C \left(\sum_{i=1}^{N} \mathbb{E}(k_{r_i}-k_{r_{i-1}})^{\frac{2}{2-q}} \right)^{\frac{2-q}{2}}\\
		&\le & C \left( \mathbb{E}\max_{i=1,N}(k_{r_i}-k_{r_{i-1}})^{\frac{q}{2-q}}\sum_{i=1}^{N}(k_{r_i}-k_{r_{i-1}}) \right)^{\frac{2-q}{2}}\\
		&\le & C \left( \mathbb{E}\max_{i=1,N}(k_{r_i}-k_{r_{i-1}})^{\frac{q}{2-q}} k_T \right)^{\frac{2-q}{2}}\\
		&\le & C \left( \mathbb{E}k_T^2\right)^{\frac{2-q}{4}} \left( \mathbb{E}\max_{i=1,N}(k_{r_i}-k_{r_{i-1}})^{\frac{2q}{2-q}}  \right)^{\frac{2-q}{4}}\\
		&\le & C \left( \mathbb{E}\max_{i=1,N}(k_{r_i}-k_{r_{i-1}})^{\frac{2q}{2-q}}  \right)^{\frac{2-q}{4}}.
	\end{eqnarray*}
	This completes the proof of Lemma \ref{lem3}.	
\end{proof}
\begin{lemma}\label{lem4} Let $R^n$ be the process defined by (\ref{R^n}).  Under assumptions {\rm{(A.1)}} and {\rm{(A.4),}}  the following inequality holds
	$$\limsup_{n\to \infty}\mathbb{E}\sup_{s\in[0,T]}\int_{s}^{T}e^{2\lambda r}dR^n_r\le 0.$$
\end{lemma}
\begin{proof}{}
	Set $h_r=h(r,X_r,Y_r)$ and $|h|_\infty=\sup_{r\in [0,T]}|h_r|$, we have
	\begin{eqnarray*}
		<Y^n_r-Y_r,h_r>&=& <Y^{n,N}_r-Y^{N}_r,h_r-h_r^N>+<Y^N_r-Y_r,h_r>\\
		&&+<Y^{n,N}_r-Y^{N}_r,h_r^N>+<Y^{n}_r-Y^{n,N}_r,h_r> .
	\end{eqnarray*}
	\begin{eqnarray*}
		\mathbb{E}\left(\sup_{s\in [0,T]}\int_{s}^{T} e^{2\lambda r}dR^n_r\right)&=& \mathbb{E}\left(\sup_{s\in [0,T]}\int_{s}^{T} e^{2\lambda r}<Y^n_r-Y_r,h(r,X_r,Y_r)(dk^n_r-dk_r)>\right) \\
		&\le&\mathbb{E}\left[\left((|Y^n|_\infty+|Y|_\infty)|h-h^N|_\infty +|Y^N-Y|_\infty|h|_\infty\right)e^{2\lambda T}(k^n_T+k_T)  \right] \\
		 &&+\mathbb{E}\left(\sup_{s\in[0,T]}\sum_{i=1}^{N}<Y^n_{r_{i-1}}-Y_{r_{i-1}},h_{r_{i-1}}>\int_{s\wedge r_{i-1}}^{r_i}e^{2\lambda r} d(k^n_r-k_r) \right)\\
		&&+\mathbb{E}\left(e^{2\lambda T}|h|_\infty\int_{0}^{T}|Y^{n,N}_r-Y^{n}_r|(dk^n_r+dk_r) \right).
	\end{eqnarray*}
	Let $1<q<2$. Using H\"older's inequality repeatedly to obtain
	\begin{eqnarray}\label{estimate int dR^n}
		&&\mathbb{E}\left(\sup_{s\in [0,T]}\int_{s}^{T} e^{2\lambda r}dR^n_r\right)\le \left[ \mathbb{E}\left[e^{2\lambda T}(|Y^n|_\infty+|Y|_\infty)\right]^2\right]^{\frac{1}{2}}\left[\mathbb{E}(k^n_T+k_T)^4 \right]^{\frac{1}{4}}  \left[ \mathbb{E}|h-h^N|_\infty^4\right]^{\frac{1}{4}}\nonumber\\
		&&  +\left[ \mathbb{E}\left[e^{2\lambda T}(k^n_T+k_T)|h|_\infty \right]^{2}\right]^{\frac{1}{2}}  \left[ \mathbb{E}|Y^N-Y|_\infty^2\right]^{\frac{1}{2}}\nonumber\\
		&&+2Ne^{2\lambda T}(1+\lambda T)\left[ \mathbb{E}\left[e^{2\lambda T}(|Y^n|_\infty+|Y|_\infty) \right]^{2}\right]^{\frac{1}{2}}\left[ \mathbb{E}|h|_\infty^4\right]^{\frac{1}{4}}  \left(\mathbb{E}\left[\sup_{s\in [0,T]}|k^n_{s}-k_{s}|^4 \right]  \right)^{\frac{1}{4}}\nonumber\\
		&& +e^{2\lambda T}\left[ \mathbb{E}|h|_\infty^{\frac{2q}{q-1}}\right]^{\frac{q-1}{2q}}\left[\mathbb{E}\left(k^n_T+k_T \right)^2  \right]^{\frac{q-1}{2q}}  \left[ \mathbb{E}\int_{0}^{T}|Y^{n,N}_r-Y^{n}_r|^q(dk^n_r+dk_r)\right]^{\frac{1}{q}}.
	\end{eqnarray}
	 The linear growth hypothesis on $ h$ combined with estimate (\ref{bound Y}) show that for every $ p\ge 1$,
	 $$\mathbb{E}|h|_\infty^p=\mathbb{E}\sup_{r\in[0,T]}|h(r,X_r,Y_r)|^p\le C(1+\mathbb{E}\sup_{r\in[0,T]}|Y_r|^p)<+\infty.$$
	On the other hand, we use inequalities (\ref{boundXn}), (\ref{bound X and k}), (\ref{bound Y^n }) and (\ref{bound Y}) along with Lemma \ref{lem3} then we pass to the limit as $n$ goes to $+\infty$ in  inequality (\ref{estimate int dR^n}) to get, for all $N\in \mathbb{N}^*$,
	\begin{eqnarray*}
		\limsup_{n\to +\infty}\mathbb{E}\left(\sup_{s\in [0,T]}\int_{s}^{T} e^{2\lambda r}dR^n_r\right)&\le& C\left(\mathbb{E}|h-h^N|_\infty^4 \right)^{1/4}+C\left(\mathbb{E}|Y^N-Y|_\infty^2 \right)^{1/2}\\
		&& +\left[ \frac{C}{N^{q/2}}+ C\left[\mathbb{E}\max_{i=1,N}\left(k_{r_i}-k_{r_{i-1}} \right)^{\frac{2q}{2-q}}  \right]^{\frac{2-q}{4}}\right]^{1/q} .
	\end{eqnarray*}
	Since the integrands are uniformly integrable, then passing to the limit as $N\to +\infty$, we get the result. Lemma \ref{lem4} is proved. \end{proof}

Now, combining inequality (\ref{main ineq}) with Lemmas \ref{lem1}, \ref{lem2} and  \ref{lem4}, we complete the proof of Theorem \ref{cvgcebsde}.
\end{proof}

\subsection{ Convergence of the penalized PDE }
This subsection is devoted to an application of our convergence of the BSDE. Namely, we will establish the convergence of a viscosity solution of the following systems
\begin{equation}\label{pdeseq}
\left\{
\begin{aligned}
\displaystyle
&\frac{\partial u^{n}_{i}}{\partial t}(t,x) +\mathcal{L}\, u^{n}_{i}(t,x)+
f_{i}(t,x,u^{n}(t,x),(\nabla u^n\sigma)(t,x))\\
&\qquad\qquad -n\,<\delta(x),\nabla
u^{n}_{i}(t,x)+\nabla l(x)>
h_{i}(t, x, u^n(t,x))  = 0\,, \\
&u^{n}(T, x)  =  g(x)\,, 1\leq i\leq m, 0\leq t\leq T, x\in\R^{d},
n\in\N \hfill\cr
\end{aligned}\right.
\end{equation}
to a viscosity solution of a system of the form
 \begin{equation}\label{pdelimit}
 \left\{
 \begin{matrix}
 \displaystyle\frac{\partial u_{i}}{\partial t}(t,x) +\mathcal{L}u_{i}(t,x)+
 f_{i}(t,x,u(t,x),(\nabla u\sigma)(t,x))  = 0\,,\  1\leq i\leq m\,,\,\,(t,x)\in[0,T)\times D\,,
 \hfill\\
 \noalign{\medskip}
 u(T,x)  =  g(x)\,,\ \  x\in D\,,\hfill\\
 \displaystyle\frac{\partial u}{\partial n}(t,x)+h(t, x, u(t,x))=0
 \,,\,\,\forall
 (t,x)\in[0,T)\times\partial D\, .\hfill
 \end{matrix}\right. 
 \end{equation}
  Since, we consider viscosity solutions, we introduce the following condition
  \vskip 0.2cm \noindent
  (A.5) \ $f_i$, the $i$-th coordinate of $f$, depends only on the $i$-th row of the matrix $z$.
 \vskip 0.2cm
For the self-contained, we recall the definition of the viscosity solution  of system (\ref{pdelimit}).
 \begin{definition}
 	\begin{itemize}
 		\item[(i)] $u\in \mathcal{C}([0,T]\times \bar{D},\mathbb{R}^m)$ is called a viscosity sub solution of system $(\ref{pdelimit})$ if $u_i(T,x)\le g_i(x)$, $x\in\bar{D}$, $1\le i\le m$ and,   for any $1\le i\le m$, $\varphi \in \mathcal{C}^{1,2}([0,T]\times\mathbb{R}^d)$, and $(t,x)\in (0,T]\times\bar{D}$ at which $u_i-\varphi$ has a local maximum, one has
 		$$- \frac{\partial \varphi}{\partial t}(t,x)-\mathcal{L}\varphi(t,x)-
 		f_{i}(t,x,u(t,x),(\nabla\varphi\sigma)(t,x)) \le 0,\quad \text{if}\quad x\,\in\, D, $$
 		\begin{eqnarray*}		
 			\begin{split}
 				&\min\left( -\frac{\partial \varphi}{\partial t}(t,x)-\mathcal{L}\varphi(t,x)-
 				f_{i}(t,x,u(t,x),(\nabla\varphi\sigma)(t,x)), \right. \\
 				& \left. -\frac{\partial \varphi}{\partial n}(t,x)-h_i(t, x, u(t,x))  \right)\le 0,\quad \mbox{if}\   x\in \partial D.
 			\end{split}
 		\end{eqnarray*}
 		\item[(ii)] $u\in \mathcal{C}([0,T]\times \bar{D},\mathbb{R}^d)$ is called a viscosity super-solution of (\ref{pdelimit}) if $u_i(T,x)\ge g_i(x)$, $x\in\bar{D}$, $1\le i\le m$ and, for any $1\le i\le m$, $\varphi \in \mathcal{C}^{1,2}([0,T]\times \mathbb{R}^d)$, and $(t,x)\in (0,T]\times\bar{D}$ at which $u_i-\varphi$ has a local minimum, one has
 		$$- \frac{\partial \varphi}{\partial t}(t,x) -\mathcal{L}\varphi(t,x)-
 		f_{i}(t,x,u(t,x),(\nabla\varphi\sigma)(t,x)) \ge 0,\quad \text{if}\quad x\,\in\, D, $$
 		\begin{eqnarray*}		
 			\begin{split}
 				& \max\left(- \frac{\partial \varphi}{\partial t}(t,x) -\mathcal{L}\varphi(t,x)-
 				f_{i}(t,x,u(t,x),(\nabla\varphi\sigma)(t,x)), \right. \\
 				& \left. -\frac{\partial \varphi}{\partial n}(t,x)-h_i(t, x, u(t,x))   \right)\ge 0,\quad \mbox{if}\  x\in \partial D.
 			\end{split}
 		\end{eqnarray*}
 		\item[(iii)]  $u\in \mathcal{C}([0,T]\times \bar{D},\mathbb{R}^m)$ is called a viscosity solution of system (\ref{pdelimit}) if it is both a viscosity sub and super-solution.	
 	\end{itemize}
 \end{definition}

We recall a continuity of the map $(t,x) \mapsto Y_t^{t,x}$ where $Y^{t,x}$ is  the solution of BSDE (\ref{baklimit}). This continuity has been proved in \cite{PardRas}.
\begin{proposition}\label{Pena: conti Y^{t,x}}
	(\cite{PardRas})
Under assumptions \rm{(A.1)} and \rm{(A.4)}, the mapping $(t,x)\to Y^{t,x}_t$ is continuous.
\end{proposition}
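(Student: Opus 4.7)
The plan is to adapt the scheme of Theorem \ref{cvgcebsde}, replacing the penalized family indexed by $n$ by families indexed by varying initial data $(t',x')$ and letting $(t',x') \to (t,x)$. First, I would invoke the continuous dependence of the reflected SDE on its initial data: under (A.1), standard results (see e.g. \cite{slominski2013, Tanaka}) yield, for every $q \ge 1$,
\begin{equation*}
\lim_{(t',x')\to(t,x)} \mathbb{E}\Big[\sup_{s\in[0,T]}|X^{t',x'}_s - X^{t,x}_s|^q + \sup_{s\in[0,T]}|K^{t',x'}_s - K^{t,x}_s|^q + \sup_{s\in[0,T]}|k^{t',x'}_s - k^{t,x}_s|^q\Big] = 0,
\end{equation*}
the processes being extended to $[0,t')$ and $[0,t)$ by the conventions fixed in Section 2.

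Now take a sequence $(t_n,x_n) \to (t,x)$ and assume, without loss of generality, that $t_n \ge t$. On $[t_n,T]$ both BSDEs are of the form (\ref{baklimit}), and the difference satisfies
\begin{equation*}
Y^{t_n,x_n}_s - Y^{t,x}_s = g(X^{t_n,x_n}_T) - g(X^{t,x}_T) + \int_s^T d\mathcal{K}^n_r - \int_s^T \bigl(Z^{t_n,x_n}_r - Z^{t,x}_r\bigr) dW_r,
\end{equation*}
where $d\mathcal{K}^n_r$ gathers the differences of drivers exactly as in the proof of Theorem \ref{cvgcebsde}. Using assumption (A.4) one obtains, by the same manipulation as there,
\begin{equation*}
\langle Y^{t_n,x_n}_r - Y^{t,x}_r, d\mathcal{K}^n_r\rangle \le \tfrac{1}{4}\|Z^{t_n,x_n}_r - Z^{t,x}_r\|^2 dr + \lambda|Y^{t_n,x_n}_r - Y^{t,x}_r|^2 dr + |Y^{t_n,x_n}_r - Y^{t,x}_r|\, dL^n_r + dR^n_r,
\end{equation*}
with $L^n, R^n$ defined analogously to (\ref{L^n}) and (\ref{R^n}).

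Applying Lemma \ref{Lemma crucial} with $V_r = \lambda r$ on the interval $[t_n,T]$ then produces an inequality of the type (\ref{main ineq}); in particular it controls $\mathbb{E}|Y^{t_n,x_n}_{t_n} - Y^{t,x}_{t_n}|^2$ by three terms. The terminal piece $\mathbb{E}|g(X^{t_n,x_n}_T) - g(X^{t,x}_T)|^2$ tends to $0$ by continuity and linear growth of $g$ and dominated convergence (analogue of Lemma \ref{lem1}); the $dL^n$-term vanishes by the argument of Lemma \ref{lem2}, using (A.4)(ii)-(iii)-(v), the continuity of $f,h$, and the a priori bounds (\ref{bound Y^n }), (\ref{bound Y}); the Stieltjes term $\mathbb{E}\sup_s \int_s^T e^{2\lambda r} dR^n_r$ is treated exactly as in Lemma \ref{lem4}, with the auxiliary discretization estimate of Lemma \ref{lem3} applied to $Y^{t_n,x_n}$ in place of the penalized solution. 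This gives $\mathbb{E}|Y^{t_n,x_n}_{t_n} - Y^{t,x}_{t_n}|^2 \to 0$; combined with the path continuity of $s\mapsto Y^{t,x}_s$ and (\ref{bound Y}), which imply $Y^{t,x}_{t_n} \to Y^{t,x}_t$ in $L^2$, one obtains $\mathbb{E}|Y^{t_n,x_n}_{t_n} - Y^{t,x}_t|^2 \to 0$. Since $Y^{t_n,x_n}_{t_n}$ and $Y^{t,x}_t$ are measurable with respect to the $\mathbb{P}$-trivial $\sigma$-fields $\mathcal{F}^{t_n}_{t_n}$ and $\mathcal{F}^t_t$, they are deterministic, and $L^2$-convergence becomes numerical convergence, yielding continuity at $(t,x)$.

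The main technical obstacle lies in the Stieltjes term $\mathbb{E}\sup_s \int_s^T e^{2\lambda r} dR^n_r$: the driving measures $dk^{t_n,x_n}$ and $dk^{t,x}$ are singular and mutually non-absolutely-continuous, so one cannot simply invoke a pointwise-small integrand. The partition/telescoping scheme from Lemmas \ref{lem3}--\ref{lem4}, coupled with the uniform $L^q$-convergence of the non-decreasing process $k^{t_n,x_n}$ to the pathwise continuous limit $k^{t,x}$, is precisely what allows this term to be tamed.
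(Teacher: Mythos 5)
The paper does not actually prove this proposition: it is quoted from \cite{PardRas}, and the paper's own contribution (Theorem \ref{cvgcebsde}) is explicitly modelled on the proof of that continuity result. Your proposal therefore reconstructs, essentially in reverse, the argument of the cited source, using exactly the machinery the paper develops (Lemma \ref{Lemma crucial}, the decomposition into the $g$-, $L^n$- and $R^n$-terms, and the discretization scheme of Lemmas \ref{lem3}--\ref{lem4}); the outline is sound and fully consistent with the paper's framework. Two points deserve more care than you give them. First, the reduction ``without loss of generality $t_n\ge t$'' is not literal: for $t_n<t$ you must compare the two solutions on $[t,T]$ and separately control $Y^{t_n,x_n}_{t_n}-Y^{t_n,x_n}_{t}$, or else work on all of $[0,T]$ with the extensions (\ref{YZsoust}), in which case the difference of drivers on $[0,t\vee t_n)$ must be written out explicitly, since the frozen extensions do not satisfy the BSDE there (the analogue of Lemma \ref{lem3} also needs a word when a partition interval straddles the initial time). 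Second, the entire argument hinges on the $L^q(\sup)$-continuity of $(t,x)\mapsto (X^{t,x},K^{t,x},k^{t,x})$, \emph{including the bounded-variation part} $k^{t,x}$; this is a genuine external input (available from the stability literature, e.g.\ \cite{RS1,slominski}, and proved in \cite{PardRas} itself) and should be stated and justified rather than absorbed into ``standard results,'' because the $R^n$-term you correctly identify as the main obstacle cannot be handled without the $L^4$-convergence of $\sup_{s}|k^{t_n,x_n}_s-k^{t,x}_s|$.
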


 Our second main result is

\begin{theorem}\label{main result}
	Assume \rm{(A.1), (A.4)} and \rm{(A.5)}. There exist a sequence of continuous functions $u^n:[0,T]\times\mathbb{R}^d\to \mathbb{R}^m$ and a function $u:[0,T]\times\bar{D}\to \mathbb{R}^m$ such that: $u^n$ is a viscosity solution to system $(\ref{pdeseq})$, $u$ is a viscosity solution to system $(\ref{pdelimit})$ and the following convergence holds for every $(t,x)\in [0,T]\times\bar{D}$
	$$\lim_{n\to +\infty}u^n(t,x)=u(t,x).$$	
\end{theorem}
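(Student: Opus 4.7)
The natural Feynman--Kac candidates are
$$u^n(t,x) := Y^{t,x,n}_t, \qquad u(t,x) := Y^{t,x}_t,$$
where $(Y^{t,x,n},Z^{t,x,n})$ and $(Y^{t,x},Z^{t,x})$ solve the BSDEs \eqref{bakseq} and \eqref{baklimit} respectively. Note that both quantities are deterministic, since $Y^{t,x,n}_t$ and $Y^{t,x}_t$ are measurable with respect to the trivial $\sigma$-algebra $\mathcal{F}^t_t = \mathcal{N}$.

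The plan has three ingredients. First, the identification of $u^n$ as a viscosity solution of the penalized (boundary--free) system \eqref{pdeseq} is exactly the nonlinear Feynman--Kac theorem of Pardoux--Peng for semilinear parabolic systems under monotonicity (assumption (A.4)(i)) and the condition (A.5); this yields the existence and continuity of $u^n$ on $[0,T]\times\R^d$ together with its characterisation as viscosity solution of \eqref{pdeseq}. Second, the fact that $u(t,x)=Y^{t,x}_t$ defines a viscosity solution of the Neumann problem \eqref{pdelimit} is the result of \cite{PardZhang} (completed in \cite{PardRas}), whose hypotheses are satisfied under (A.1), (A.4), (A.5); continuity of $u$ on $[0,T]\times\bar D$ is furnished by Proposition \ref{Pena: conti Y^{t,x}}. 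Neither of these two steps requires new work.

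The only thing to verify is the pointwise convergence $u^n(t,x)\to u(t,x)$ for every $(t,x)\in[0,T]\times\bar D$. This is immediate from Theorem \ref{cvgcebsde}: indeed, for the fixed pair $(t,x)$ one has
$$|u^n(t,x)-u(t,x)|^2 = |Y^{t,x,n}_t-Y^{t,x}_t|^2 \le \mathbb{E}\sup_{r\in[0,T]}|Y^{t,x,n}_r-Y^{t,x}_r|^2 \xrightarrow[n\to\infty]{} 0,$$
where the first equality uses that both sides are deterministic constants and the inequality is just $|a|^2 = \mathbb{E}|a|^2$ for a constant $a$ bounded above by the expected supremum. Concatenating these three ingredients completes the proof.

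The genuinely delicate step is Theorem \ref{cvgcebsde}, which has already been established; everything else in this theorem is a packaging of known Feynman--Kac representations plus the observation that deterministic values of the BSDE are controlled by the $L^2$-norm of the process. I therefore expect no real obstacle at this stage, beyond verifying that the hypotheses of \cite{PardZhang} and \cite{PardRas} are satisfied (in particular, that (A.4) provides enough regularity for the reflected BSDE to yield a viscosity solution of the Neumann problem, and that (A.5) is what ensures the PDE interpretation coordinate by coordinate).
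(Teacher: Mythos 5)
Your proposal is correct and follows essentially the same route as the paper: define $u^n(t,x):=Y^{t,x,n}_t$ and $u(t,x):=Y^{t,x}_t$, invoke the Feynman--Kac representations (the paper cites \cite{Pard} for $u^n$ and \cite{PardZhang,PardRas} for $u$), and deduce pointwise convergence from Theorem \ref{cvgcebsde} via the bound $|Y^{t,x,n}_t-Y^{t,x}_t|^2\le \mathbb{E}\sup_{s\in[0,T]}|Y^{t,x,n}_s-Y^{t,x}_s|^2$. No substantive differences.
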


\begin{proof}{}
We set, \begin{equation}
u^n(t,x):=Y^{t,x,n}_t\quad \mbox{and}\quad u(t,x):=Y^{t,x}_t.
\end{equation}
It follows from Theorem 3.2 of \cite{Pard} that $u^n$ is a viscosity solution of PDEs (\ref{pdeseq}). Thanks to \cite{PardRas,PardZhang}, $u$ is a viscosity solution of PDEs (\ref{pdelimit}). Further, we have for each $(t,x)\in [0,T]\times \bar{D}$
\begin{eqnarray*}
|u^n(t,x)-u(t,x)|^2=|Y^{t,x,n}_t-Y^{t,x}_t|^2\le \mathbb{E}\sup_{s\in [0,T]}|Y^{t,x,n}_s-Y^{t,x}_s|^2.
\end{eqnarray*}
Thanks to Theorem \ref{cvgcebsde}, we have
$lim_{n\rightarrow\infty}\mathbb{E}\sup_{s\in [0,T]}|Y^{t,x,n}_s-Y^{t,x}_s|^2 = 0$.
It follows that,
$$\lim_{n\to+\infty}u^n(t,x)=u(t,x).$$
The theorem is proved.
\end{proof}
\begin{remark}
 When $u$ is the unique viscosity solution of system $(\ref{pdelimit})$, then it is constructible by penalization. This is the case when  $d=d'$, since by \cite[Theorem 5.43, p 423]{PardRas2014} or \cite[Theorem 5.1]{PardRas} the viscosity solution of system (\ref{pdelimit}) is unique.
\end{remark}

\appendix

\end{document}